\documentclass{amsart}[12pt]

\usepackage[hidelinks]{hyperref}
\usepackage{yfonts} %
\usepackage{amssymb} %
\usepackage{amsthm}
\usepackage{verbatim}
\usepackage{array}
\usepackage{booktabs}%
\usepackage{hhline}%
\usepackage{xy} %
\usepackage{epsfig}%
\usepackage{color}%
\usepackage{upgreek}
\usepackage[english]{babel}
\usepackage{epigraph}%
\usepackage{fancybox}%
\setcounter{totalnumber}{2}
\usepackage{shadow}
\usepackage{afterpage}
\usepackage{mathrsfs}
\usepackage{enumitem}
\usepackage{tabularx}
\usepackage{subcaption}
\usepackage{graphicx}
\usepackage{type1cm}
\usepackage{eso-pic}
\usepackage{color}
\usepackage{upgreek}
\usepackage[foot]{amsaddr}

\newtheorem{theorem}{Theorem}
\newtheorem{lemma}{Lemma}
\newtheorem{proposition}{Proposition}
\theoremstyle{definition}

\newtheorem{example}{Example}

\theoremstyle{plain}
\newtheorem{corollary}{Corollary}

\newcommand{\otoprule}{\midrule[\heavyrulewidth]}
\newcommand{\vt}{\vspace{.1cm}}
\newcommand{\vtt}{\vspace{.2cm}}
\newcommand{\R}{\mathbb{R} }
\newcommand{\q}{\mathbb{Q} }

\newcommand{\h}{\mathbb{H}}

\newcommand{\hf}{\mathbb{H}_{\mathbb F}^m}

\newcommand{\s}{\mathbb{S}}

\renewcommand{\rho}{\varrho}

\renewcommand{\theta}{\varTheta}
\renewcommand{\Theta}{\varTheta}
\renewcommand{\Sigma}{\varSigma}
\renewcommand{\tau}{\uptau}
\captionsetup[subfigure]{labelfont=rm}

\usepackage{amsmath}

\newcommand{\overbar}[1]{\mkern 1.5mu\overline{\mkern-1.5mu#1\mkern-1.5mu}\mkern 1.5mu}

\makeatletter
\newcommand{\tpitchfork}{%
  \vbox{
    \baselineskip\z@skip
    \lineskip-.52ex
    \lineskiplimit\maxdimen
    \m@th
    \ialign{##\crcr\hidewidth\smash{$-$}\hidewidth\crcr$\pitchfork$\crcr}
  }%
}
\makeatother

\begin{document}

\title[]
{Weingarten Flows in Riemannian Manifolds}
\author{Ronaldo Freire de Lima}
\address[A1]{Departamento de Matem\'atica - Universidade Federal do Rio Grande do Norte}
\email{ronaldo.freire@ufrn.br}
\subjclass[2020]{53E10 (primary), 53E99 (secondary).}
\keywords{curvature flows - Weingarten flow  -- Riemannian manifold.}
\maketitle


\begin{abstract}
  Given orientable Riemannian manifolds $M^n$ and $\overbar M^{n+1},$
  we study  flows $F_t:M^n\rightarrow\overbar M^{n+1},$ called Weingarten flows,
  in which the hypersurfaces $F_t(M)$ evolve in the direction of their normal vectors
  with speed  given by a function $W$ of their principal curvatures,
  called a Weingarten function, which is
  homogeneous, monotonic increasing with respect to any of its variables, and
  positive on the positive cone. We obtain  existence results for flows
  with isoparametric initial data, in which the hypersurfaces
  $F_t:M^n\rightarrow\overbar M^{n+1}$  are all parallel, and $\overbar M^{n+1}$ is either
  a simply connected space form or a rank-one symmetric space of noncompact type.
  We prove that the avoidance principle holds for Weingarten flows defined by
  odd Weingarten functions, and also that such flows  are embedding preserving.
\end{abstract}

\section{Introduction}
Given an open set $\Gamma\subset\R^n$
containing $\Gamma_+:=\{(k_1,\dots, k_n)\,;\, k_i>0\},$
we say that
$W=W(k_1,\dots, k_n)\in C^\infty(\Gamma)$ is a \emph{Weingarten function} if it is
symmetric, homogeneous, monotonic increasing with respect to any of its variables, and
positive on $\Gamma_+.$
For a  hypersurface $f\colon M^n\rightarrow\overbar M^{n+1}$  ($M^n$ and $\overbar M^{n+1}$ are arbitrary
orientable Riemannian manifolds),
denote by $k_1, \dots ,k_n$  its principal curvature functions.
Assuming that $(k_1(p),\dots ,k_n(p))\in\Gamma$ for all $p\in M,$
we define the \emph{Weingarten function} $W_f$ of $f$ associated to $W$  as
\[
W_f(p):=W(k_1(p),\dots, k_n(p)), \,\,\,  p\in M.
\]
If $W_f$ is constant on $M,$ we say that $f$ is a \emph{$W$-hypersurface}.

The  higher order mean curvatures $H_r,$ $1\le r\le n,$
and the squared norm of the second fundamental form $\|A\|^2$
are  distinguished examples of   Weingarten functions. They are defined as
$$H_r=\sum_{i_1<\cdots <i_r}k_{i_1}\dots k_{i_r}\quad\text{and}\quad
\|A\|^2=\sum_{i=1}^{n}k_i^2.$$

In this paper, we shall consider the problem of finding a one-parameter family of smooth
oriented immersions $F(.\,,t):M^n\rightarrow\overbar M^{n+1},$ $t\in[0,T),$ which, for a given
Weingarten function $W\in C^\infty(\Gamma),$ satisfy
the evolution equation:
\begin{equation} \label{eq-Wflowintro}
\left\{
\begin{array}{l}
\displaystyle\frac{\partial F}{\partial t}(p,t)=W(p,t)N(p,t)\\[2ex]
F(p\,, 0)=f(p),
\end{array}
\right.
\end{equation}
where $N(p,t)$ is the
unit normal to the hypersurface $F_t:=F(.\,,t),$
and $W(.\,, t)=W_{F_t}$ is the Weingarten function of $F_t$  associated to $W.$
We shall call such a family of immersions a \emph{Weingarten flow}
(or  a $W$-\emph{flow}, in order to specify the function $W$) in $\overbar M^{n+1}$
with \emph{initial data} $f.$

Huisken and Polden \cite{huisken-polden} have established  existence of short time solutions
to \eqref{eq-Wflowintro}.
Here, we shall seek solutions  such that the immersions $F_t:M^n\rightarrow\overbar M^{n+1}$
are all parallel to the initial data $f,$ that is,
\begin{equation} \label{eq-parallelWflow}
F_t(p)=\exp_{f(p)}(\varphi(t)N(p)), \,\,\, (p,t)\in M\times [0,T),
\end{equation}
where $\exp$ stands for the exponential map of $\overbar M^{n+1},$ $\varphi\in C^\infty[0,T)$
satisfies $\varphi(0)=0,$ and  $N$ is the unit normal to $f.$ We call
$F_t$  a \emph{parallel $\varphi$-flow}, and
choose
\begin{equation} \label{eq-nt}
N(p,t)=d\exp_{f(p)}(\varphi(t)N(p))N(p)
\end{equation}
as the unit normal field of $F_t.$

As we shall see, if a parallel $\varphi$-flow is a solution
to \eqref{eq-Wflowintro}, then each  $F_t:M^n\rightarrow\overbar M^{n+1}$ is necessarily a
$W$-hypersurface.
This fact leads us to consider isoparametric hypersurfaces of $\overbar M^{n+1},$
i.e., those on which the principal curvatures  are constant functions.
In this context, the simply connected space forms $\q_\epsilon^{n+1}$ of constant
sectional curvature
$\epsilon\in\{0,1,-1\},$ as well as the rank-one symmetric
spaces of noncompact type (i.e., the hyperbolic spaces $\hf$), are natural sources
of parallel $W$-flows, since these spaces
have many families of isoparametric hypersurfaces
(see Section \ref{sec-parallelWflows} for details).

Our first main result, as stated below,  concerns parallel Weingarten flows in
space forms of non positive curvature.

\begin{theorem} \label{th-WflowsSpaceforms}
For $\epsilon\in\{0,-1\},$ let $f:M^n\rightarrow\q_\epsilon^{n+1}$
be a complete non totally geodesic isoparametric hypersurface of \,$\q_\epsilon^{n+1},$
and let $W\in C^\infty(\Gamma)$ be a Weingarten function.
Then,  there exists a parallel
$\varphi$-flow $F_t$ defined on a maximal interval $[0,T),$ $T\le +\infty,$
which is a solution to \eqref{eq-Wflowintro}, and has
the following properties, according to the isoparametric type of $f(M):$
\begin{itemize}[parsep=1ex]
  \item[\rm i)] If $f(M)\subset\h^{n+1}$ is a horosphere, then $T=+\infty$ and $\{F_t(M), t\in (0,+\infty)\}$ is a family of horospheres in
  $\h^{n+1}$ which foliates the open horoball bounded by $f(M).$
  \item[\rm ii)] If $f(M)\subset\h^{n+1}$ is an equidistant hypersurface to a
  totally geodesic hyperplane $\Pi\subset\h^{n+1},$
  then $T=+\infty$ and $F_t(M)\rightarrow \Pi$ as $t\rightarrow+\infty.$
  \item[\rm iii)] If $f(M)\subset\q_\epsilon^{n+1}$ is  either a geodesic sphere  or a generalized cylinder, then
  $T<+\infty,$  $\varphi(T)$ is the focal distance of $f(M),$  and $F_t(M)$ collapses
  into the focal set of $f(M)$ at $t=T.$
\end{itemize}
\end{theorem}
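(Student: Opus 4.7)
My plan is to reduce the evolution system \eqref{eq-Wflowintro} to a scalar ODE for $\varphi$ and then analyze this ODE in each of the three cases (i)--(iii), using the explicit Jacobi-field formulas for the principal curvatures of parallel hypersurfaces in space forms. Differentiating $F_t(p) = \exp_{f(p)}(\varphi(t) N(p))$ with respect to $t$ gives $\partial_t F(p,t) = \varphi'(t) N(p,t)$, with $N(p,t)$ as in \eqref{eq-nt}. Since $f$ is isoparametric and the ambient space is a simply connected space form, each parallel hypersurface $F_t(M)$ is again isoparametric, with constant principal curvatures $k_i(\varphi(t))$ that are explicit functions of $\varphi(t)$ and of the constant principal curvatures of $f$. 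By the symmetry and homogeneity of $W$, the Weingarten function $W_{F_t}$ reduces to a smooth function $\omega(\varphi(t))$ of $\varphi$ alone, and comparison with \eqref{eq-Wflowintro} yields the initial value problem
\[
\varphi'(t) = \omega(\varphi(t)), \qquad \varphi(0) = 0,
\]
whose maximal interval of existence $[0,T)$ is the one on which the solution stays strictly below the focal distance of $f(M)$.

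I would then analyze this ODE case by case, using the explicit form of $\omega$. For a horosphere in $\h^{n+1}$, with $N$ chosen so that the principal curvatures equal $1$, every parallel horosphere also has all principal curvatures equal to $1$; hence $\omega \equiv W(1,\dots,1) > 0$, so $\varphi(t)$ grows linearly for all time and the parallel hypersurfaces foliate the open horoball, as in (i). For an equidistant hypersurface at distance $d$ from $\Pi$, with orientation chosen so that $\varphi(t)$ measures signed displacement toward $\Pi$, the parallel at distance $\varphi\in(0,d)$ has all principal curvatures equal in magnitude to $\tanh(d-\varphi)$, so by homogeneity $\omega(\varphi) = [\tanh(d-\varphi)]^{\alpha}\, W(1,\dots,1)$, where $\alpha$ is the degree of homogeneity of $W$; since $\omega(d)=0$, the solution exists for all $t \ge 0$ and satisfies $\varphi(t)\nearrow d$, proving $F_t(M)\to\Pi$. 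For a geodesic sphere or a generalized cylinder of focal distance $d$, the nontrivial principal curvatures of the parallel at distance $\varphi\in(0,d)$ blow up as $\varphi\to d^-$ (they are $1/(d-\varphi)$ in the Euclidean case and $\coth(d-\varphi)$ in the hyperbolic case), so $\omega(\varphi)\to\infty$ fast enough that
\[
T = \int_0^d \frac{d\varphi}{\omega(\varphi)} < \infty,
\]
and at $t = T$ the hypersurface $F_t(M)$ collapses onto the focal set of $f(M)$.

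The main obstacle I anticipate is keeping track of the orientation and sign conventions so that, in each case, the principal curvatures of $F_t(M)$ remain in the domain $\Gamma$ of $W$ throughout the flow and $\omega$ is smooth and positive on the admissible range of $\varphi$. Once these conventions are pinned down, the qualitative behavior claimed in (i)--(iii) follows directly from the elementary ODE $\varphi'=\omega(\varphi)$ together with the classical Jacobi-field computation --- which simultaneously identifies the focal set of $f(M)$ with the locus where the map $p\mapsto\exp_{f(p)}(\varphi N(p))$ ceases to be a nonsingular immersion.
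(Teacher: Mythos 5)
Your proposal follows essentially the same path as the paper: reduce \eqref{eq-Wflowintro} to the scalar autonomous ODE $\varphi'=\omega(\varphi)$, $\varphi(0)=0$, via the observation (the paper's Proposition~\ref{prop-solutionWflow} and Corollary~\ref{cor-existence}) that the parallels of an isoparametric hypersurface are again isoparametric, and then analyze this ODE case by case. In case (i) your argument is identical.

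Where you diverge is in how the ODE is analyzed in cases (ii) and (iii). In case (ii) you exploit that equidistant hypersurfaces are totally umbilical with principal curvature $\tanh(\text{distance})$, so that homogeneity gives the closed-form expression $\omega(\varphi)=\tanh^\alpha(d-\varphi)\,W(1,\dots,1)$, and conclude by a standard ODE argument ($\omega(d)=0$, so the solution exists for all $t$ and increases to $d$). The paper instead argues qualitatively without invoking umbilicality: it shows $T=+\infty$ by a maximal-extension argument, then deduces concavity and boundedness of $\varphi$ to force $\varphi'\to 0$ and hence $\varphi\to d$. Your computation is cleaner but uses more structure; the paper's is more robust (no explicit formula needed). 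In case (iii) you write $T=\int_0^d d\varphi/\omega(\varphi)$, while the paper argues that $\varphi$ is increasing, bounded, and strictly convex, which forces finite $T$. One small point: your phrase that $\omega\to\infty$ ``fast enough'' for this integral to converge is misleading---the blowup of $\omega$ near $d$ plays no role in convergence (it only makes the integrand vanish there); what actually makes $T$ finite is that $\omega$ is bounded below by the positive constant $\omega(0)=W_{f_{\tau_0}}$ on $[0,d)$, by monotonicity of the curvatures of the parallels and monotonicity of $W$. Both treatments share the same implicit assumption, namely $W_{f_{\tau_0}}>0$ (which can fail, e.g., for $H_r$-flows on generalized cylinders with $r$ exceeding the rank of the spherical factor), and both implicitly use $0\in\Gamma$ so that $\omega$ is smooth up to the focal distance in case (ii). These are shared gaps rather than differences, so overall your proof is correct and a mild variant of the paper's.
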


We also consider $W$-flows in $\s^{n+1}$ from its isoparametric hypersurfaces, obtaining
then the following result (see Section \ref{sec-parallelWflows} for definitions).

\begin{theorem} \label{th-WflowsSpheres}
Let $\mathscr F=\{f_\tau\colon M\rightarrow\s^{n+1}\,;\, \tau\in(0,\pi/g)\}$
be a family of positively oriented isoparametric  hypersurfaces of \,$\s^{n+1},$ and let
\,$W\in C^\infty(\Gamma)$ be  a Weingarten function such that
$W_{f_\tau}$ is well defined for all $f_\tau\in\mathscr F.$
Given $\tau_0\in(0,\pi/g),$ assume that
$W_{f_{\tau_0}}>0$ (resp. $W_{f_{\tau_0}}<0$), and that the function
$\tau\mapsto W_{f_{\tau_0-\tau}}$ (resp. $\tau\mapsto W_{f_{\tau_0+\tau}}$)
is increasing (resp. decreasing) on $[0,\tau_0)$  (resp. on $[0,\pi/g-\tau_0)$).
Under these conditions, the maximal  parallel $\varphi$-flow solution
$F_t=f_{\tau_0-\varphi(t)}$ to \eqref{eq-Wflowintro} with initial data $F_0=f_{\tau_0}$
collapses into the focal set $\mathscr F_+$ (resp. $\mathscr F_-$)
at  $t=\varphi^{-1}(\tau_0)$ (resp. $t=\varphi^{-1}(\tau_0-\pi/g)$).
\end{theorem}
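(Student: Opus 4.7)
The plan is to substitute the ansatz $F_t = f_{\tau_0 - \varphi(t)}$ into the evolution equation \eqref{eq-Wflowintro} and reduce the PDE system to a single scalar ODE for $\varphi$, which can then be analyzed explicitly. The initial condition $F_0 = f_{\tau_0}$ is automatic from $\varphi(0) = 0$. Since $\mathscr F$ consists of parallel hypersurfaces, the curve $\tau \mapsto f_\tau(p)$ is a unit-speed geodesic meeting each $f_\tau(M)$ orthogonally, so $\partial_\tau f_\tau(p)$ is a unit normal to $f_\tau$; the positive orientation convention on $\mathscr F$ identifies this vector, up to sign, with the normal $N(p,t)$ defined by \eqref{eq-nt}. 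Combining this with the chain rule yields
\[
\frac{\partial F}{\partial t}(p,t) = \varphi'(t)\, N(p,t),
\]
and so \eqref{eq-Wflowintro} becomes $\varphi'(t) = W_{F_t}(p)$. Since each $f_\tau$ is isoparametric, $W_{f_\tau}$ is a constant function on $M$; writing $\psi(s) := W_{f_{\tau_0 - s}}$, the flow reduces to the autonomous scalar ODE
\[
\varphi'(t) = \psi(\varphi(t)), \qquad \varphi(0) = 0.
\]

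In the first case of the theorem, the hypotheses make $\psi$ positive and strictly increasing on $[0, \tau_0)$, so standard ODE theory yields a unique smooth, strictly increasing solution $\varphi$ on a maximal interval $[0,T)$. Separation of variables gives
\[
t = \int_0^{\varphi(t)} \frac{ds}{\psi(s)},
\]
and the bound $\psi(s) \geq \psi(0) > 0$ forces $T = \int_0^{\tau_0} ds/\psi(s) \leq \tau_0/\psi(0) < \infty$, with $\varphi(t) \to \tau_0$ as $t \to T$. Consequently $F_t(M) = f_{\tau_0 - \varphi(t)}(M)$ converges to the focal set $\mathscr F_+$, and the flow cannot be extended past $T = \varphi^{-1}(\tau_0)$ since $F_T$ degenerates onto the focal set and is no longer an immersion. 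The opposite sign case $W_{f_{\tau_0}} < 0$ is entirely symmetric: the reduced ODE now produces a strictly decreasing $\varphi$ that attains $\tau_0 - \pi/g$ in finite time $\varphi^{-1}(\tau_0 - \pi/g)$, and $F_t$ collapses onto $\mathscr F_-$.

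The main technical point is the normal-vector identification in the first paragraph: one must verify carefully that $\partial_\tau f_\tau$ coincides with $+N(p,t)$ (rather than $-N(p,t)$) under the positive orientation convention on $\mathscr F$, so that the reduced ODE reads $\varphi' = +\psi(\varphi)$ and the solution moves in the expected direction toward the correct focal set. Once this sign is settled, the rest of the argument is elementary separable-ODE analysis together with standard facts about focal sets of isoparametric families in $\s^{n+1}$.
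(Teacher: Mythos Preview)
Your proposal is correct and takes essentially the same approach as the paper: both reduce the flow to the scalar ODE $\varphi'(t)=W_{f_{\tau_0-\varphi(t)}}$, $\varphi(0)=0$ (a reduction the paper has already recorded as Proposition~\ref{prop-solutionWflow} and Corollary~\ref{cor-existence}, so you need not re-derive it), and then argue finite-time collapse onto the focal set. The only cosmetic difference is that you bound $T$ via separation of variables and the lower bound $\psi\ge\psi(0)>0$, whereas the paper simply invokes the convexity argument from the proof of Theorem~\ref{th-WflowsSpaceforms}(iii).
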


It should be mentioned that Theorems \ref{th-WflowsSpaceforms} and \ref{th-WflowsSpheres}
constitute  extensions of the main results of \cite{dosreis-tenenblat}, where the authors
considered  mean curvature flows by parallel hypersurfaces in $\q_\epsilon^{n+1}.$

By considering isoparametric hypersurfaces of the hyperbolic spaces
$\hf,$ we obtain the following result. 

\begin{theorem} \label{th-WflowsHypspaces}
Let $f:M^n\rightarrow\hf$ be either a horosphere or
a geodesic sphere of \,$\hf,$ and let $W\in C^\infty(\Gamma)$ be a Weingarten function.
Then, there exists a parallel $\varphi$-flow $F_t$ defined on a maximal interval $[0,T),$ $T\le +\infty,$
which is a solution to  \eqref{eq-Wflowintro}.
In addition, the following  hold:
\begin{itemize}[parsep=1ex]
  \item[\rm i)] If $f(M)$ is a horosphere, then $T=+\infty$ and $\{F_t(M), t\in (0,+\infty)\}$ is a family of horospheres in
  $\hf$ which foliates the open horoball bounded by $f(M).$
  \item[\rm ii)] If $f(M)$ is  a geodesic sphere,  then
  $T<+\infty,$  $\varphi(T)$ is the radius of $f(M),$  and $F_t(M)$ collapses
  into the center of $f(M)$  at $t=T.$
\end{itemize}
\end{theorem}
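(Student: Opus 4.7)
The plan is to reduce the Weingarten flow PDE \eqref{eq-Wflowintro}, for a parallel $\varphi$-flow issuing from a horosphere or geodesic sphere of $\hf$, to an autonomous ODE in $\varphi$, following the strategy suggested for Theorem~\ref{th-WflowsSpaceforms}. By the discussion in Section~\ref{sec-parallelWflows}, horospheres and geodesic spheres of $\hf$ are isoparametric, and their parallel hypersurfaces at inward signed distance $\varphi$ are again horospheres and geodesic spheres respectively. Hence the principal curvatures $k_1(\varphi),\dots,k_n(\varphi)$ of $F_\varphi(M)$ are constants on $M$ depending only on $\varphi$. Setting $g(\varphi):=W(k_1(\varphi),\dots,k_n(\varphi))$, a direct computation from \eqref{eq-parallelWflow} and \eqref{eq-nt} shows that the parallel $\varphi$-flow satisfies \eqref{eq-Wflowintro} if and only if $\varphi$ solves the separable ODE
\[
\varphi'(t)\;=\;g(\varphi(t)),\qquad \varphi(0)=0,
\]
whose unique maximal solution is characterized by $t=\int_0^{\varphi(t)}ds/g(s)$.

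For assertion (i), I would invoke the fact that the one-parameter group of transvections of $\hf$ along any geodesic issuing from the ideal point of $f(M)$ acts by isometries and permutes the parallel horospheres foliating the horoball bounded by $f(M)$. Since isometries preserve principal curvatures, the functions $k_i(\varphi)$ are independent of $\varphi$, so $g\equiv c$ for some constant $c>0$. Therefore $\varphi(t)=ct$ is defined on $[0,+\infty)$, giving $T=+\infty$, and the fact that every horosphere strictly inside the horoball is realized as $F_t(M)$ for a unique $t>0$ yields the claimed foliation.

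For assertion (ii), let $r_0$ be the radius of $f(M)$, so that $F_\varphi(M)$ is a geodesic sphere of radius $r_0-\varphi$ for $\varphi\in[0,r_0)$. The principal curvatures of a geodesic sphere of radius $r$ in $\hf$ are, with multiplicities depending on $\mathbb F$, of the form $\coth r$ and $2\coth(2r)$, each behaving like $1/r$ as $r\to 0^+$. Thus every $k_i(\varphi)$ is asymptotic to $(r_0-\varphi)^{-1}$ as $\varphi\to r_0^-$. Since $W$ is strictly monotonic increasing in each variable and homogeneous on $\Gamma_+$, evaluating the homogeneity identity $W(\lambda k)=\lambda^d W(k)$ along $\lambda>1$ forces the degree $d$ to be strictly positive, and hence $g(\varphi)\sim C(r_0-\varphi)^{-d}$ for some $C>0$. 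This makes $T=\int_0^{r_0}ds/g(s)<+\infty$, $\varphi(T)=r_0$, and $F_t(M)$ collapses into the center of $f(M)$ at $t=T$.

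The main obstacle is the last step of case (ii): guaranteeing $T<+\infty$ requires combining the explicit asymptotics of the principal curvatures of geodesic spheres in $\hf$ near zero radius with the observation that a homogeneous, strictly monotone Weingarten function must have strictly positive degree of homogeneity. Case (i) reduces to a linear ODE and is essentially immediate once the invariance of the principal curvatures under the one-parameter group of horospherical transvections is established.
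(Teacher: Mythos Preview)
Your reduction to the autonomous ODE $\varphi'=g(\varphi)$ and the treatment of case (i) coincide with the paper's: both rely on the fact that all horospheres in the family are isometric (the paper cites \cite{berndtetal}, you invoke the transvection group), hence $g$ is constant and $\varphi(t)=ct$.

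For case (ii) your argument is correct but takes a genuinely different route from the paper. You show $T<\infty$ quantitatively: using that both principal curvatures behave like $(r_0-\varphi)^{-1}$ and that $W$ is homogeneous of some degree $d\ge 0$, you obtain $g(\varphi)\sim W(1,\dots,1)\,(r_0-\varphi)^{-d}$ and conclude that $T=\int_0^{r_0}ds/g(s)$ is finite. The paper instead argues qualitatively, exactly as in Theorem~\ref{th-WflowsSpaceforms}(iii): since each $k_i$ increases as the radius $r_0-\varphi$ shrinks, monotonicity of $W$ forces $\varphi'=g(\varphi)$ to be nondecreasing in $t$, so $\varphi$ is bounded by $r_0$, increasing, and convex on $[0,T)$; such a function cannot be defined on an unbounded interval, whence $T<\infty$, and maximality then forces $\varphi(T)=r_0$. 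Your approach yields the explicit formula for $T$ and the precise blow-up rate of the speed, at the cost of invoking the exact asymptotics of $\coth$ and the homogeneity degree; the paper's convexity argument uses only monotonicity and is insensitive to the explicit form of the principal curvatures.

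Two small points. First, the paper's hypothesis is merely ``monotonic increasing'', not strictly, so your deduction that $d>0$ should read $d\ge 0$; the case $d=0$ (where $W$ is necessarily constant on $\Gamma_+$) gives $g\equiv c>0$ and $T=r_0/c<\infty$, so your integral argument still goes through. Second, your formula $2\coth(2r)$ for the second principal curvature differs from the paper's $\tfrac12\coth(r/2)$; this is a metric-normalization discrepancy and is harmless for your argument, since both are asymptotic to $1/r$.
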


An important property shared by many kinds of flows in Euclidean space is the \emph{avoidance principle},
which essentially says that two flows with disjoint initial data remain
disjoint until one of them colapses. Here, by means of a result by R. Hamilton, we establish an
avoidance principle for  $W$-flows $F_t:M^n\rightarrow\overbar M^{n+1}$ whose
Weingarten function $W\in C^\infty(\Gamma)$ is odd.
Setting $\mathbf{k}:=(k_1,\dots ,k_n),$
this means  that
$W$ admits an extension to $-\Gamma:=\{-\mathbf{k}\,;\, \mathbf{k}\in\Gamma\}$
which satisfies $W(-\mathbf{k})=-W(\mathbf{k}).$
As one can easily check, for $r$ odd, the mean curvatures $H_r$
are all odd functions.

\begin{theorem}[avoidance principle] \label{th-AP}

  Let $M_1^n,$ $M_2^n,$ and $\overbar M^{n+1}$ be  complete connected Riemannian manifolds, being $M_2^n$ compact.
  Assume that $W\in C^{\infty}(\Gamma)$  is an  odd Weingarten function,  and that
  $$F^i:M_i^n\times [0,T)\rightarrow\Omega\subset\overbar M^{n+1}, \,\,\, i=1,2,$$
  are $W$-flows, where $\Omega$ is a strongly convex open set of $\overbar M^{n+1}.$ 
  Under these conditions, we have that the function
  \[
  D(t):={\rm dist}^2(F_t^1(M_1),F_t^2(M_2)), \,\,\, t\in[0, T),
  \]
  is no decreasing. In particular, if
   $F_0^1(M_1)$ and $F_0^2(M_2)$ are disjoint, then
  $F_t^1(M_1)$ and $F_t^2(M_2)$ are disjoint for all $t\in [0,T).$
\end{theorem}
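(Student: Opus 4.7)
My strategy combines Hamilton's lemma on differentiation of the minimum of a smooth family with a second-variation argument that exploits the oddness of $W$.

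Fix $t_{0}\in[0,T)$ with $D(t_{0})>0$; the case $D(t_{0})=0$ is vacuous. The compactness of $M_{2}$ and strong convexity of $\Omega$ produce realizing points $p_{1}\in M_{1},p_{2}\in M_{2}$ joined by a unique minimizing geodesic $\gamma\colon[0,L]\to\Omega$ of length $L=\sqrt{D(t_{0})}$ from $F_{t_{0}}^{1}(p_{1})$ to $F_{t_{0}}^{2}(p_{2})$. Hamilton's lemma reduces the claim that $D$ is nondecreasing to proving, at every such realizing pair,
\[
\left.\tfrac{d}{dt}\right|_{t_{0}}d\bigl(F_{t}^{1}(p_{1}),F_{t}^{2}(p_{2})\bigr)^{2}\ge 0,
\]
with $p_{1},p_{2}$ held fixed. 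The first variation formula forces $\dot\gamma$ to meet each hypersurface orthogonally at its endpoints. Since $W$ is odd, the flow equation $\partial_{t}F^{i}=W N^{i}$ is invariant under $N^{i}\mapsto -N^{i}$ (with $W\mapsto -W$), so I may orient the normals so that $\dot\gamma(0)=-N^{1}(p_{1},t_{0})$ and $\dot\gamma(L)=N^{2}(p_{2},t_{0})$. A direct computation then shows that the above derivative equals $2L(W_{1}+W_{2})$, where $W_{i}$ is the Weingarten function of $F_{t_{0}}^{i}$ at $p_{i}$ relative to the chosen normal, reducing the proof to $W_{1}+W_{2}\ge 0$.

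To establish this pointwise inequality, I apply the second variation of arclength to $\gamma$, with variations whose endpoints stay on $F_{t_{0}}^{1}(M_{1})$ and $F_{t_{0}}^{2}(M_{2})$. Parallel transport $P$ along $\gamma$ identifies $T_{p_{1}}F_{t_{0}}^{1}$ with $T_{p_{2}}F_{t_{0}}^{2}$ (both equal $\dot\gamma^{\perp}$), and testing the second variation with parallel vector fields yields a semidefinite estimate of the form $A_{1}+P^{\ast}A_{2}P\succeq\mathcal{R}_{\gamma}$, where $A_{i}$ is the shape operator of $F_{t_{0}}^{i}$ at $p_{i}$ in the chosen orientation and $\mathcal{R}_{\gamma}$ is the integrated curvature operator $X\mapsto\int_{0}^{L}R(PX,\dot\gamma)\dot\gamma\,ds$ (vanishing when $\overbar M$ is flat). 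Weyl's eigenvalue inequality then delivers the componentwise bound $\lambda_{i}^{\uparrow}(A_{1})+\lambda_{n-i+1}^{\uparrow}(A_{2})\ge 0$; by the symmetry and monotonicity of $W$, I conclude $W_{1}=W(\lambda(A_{1}))\ge W(-\lambda(A_{2}))$, and the identity $W(-\mathbf{k})=-W(\mathbf{k})$ (oddness) converts the right side to $-W_{2}$, giving $W_{1}+W_{2}\ge 0$.

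The main obstacle is this last step: passing from a semidefinite estimate on shape operators to a componentwise comparison of principal curvatures that interacts properly with the nonlinear symmetric function $W$. The oddness identity is what enables the sign flip at the end, while the bookkeeping of signs coming from the normal orientations throughout the second-variation formula must be kept consistent. In an ambient space of non-constant curvature, the integrated curvature contribution along $\gamma$ must be further controlled, which the strong convexity of $\Omega$ and a Riccati-type propagation of the shape-operator estimate along $\gamma$ should handle.
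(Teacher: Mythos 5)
Your overall architecture closely parallels the paper's: invoke Hamilton's trick to differentiate $D(t)$ at a realizing pair, use the first-variation orthogonality and the oddness of $W$ to normalize the unit normals along the minimizing geodesic $\gamma$, compute $D'(t_0)$ in terms of the Weingarten values at the two touching points, and reduce everything to a comparison of the two shape operators that is then fed through the symmetry/monotonicity of $W$ (your Weyl step is the counterpart of the paper's appeal to Gelfand's eigenvalue monotonicity). Up to the sign convention on $N^1$ (you take both normals pointing outward from the strip, giving $D'=2L(W_1+W_2)$, while the paper takes both pointing forward along $\gamma$, giving $D'=2L(W_2-W_1)$), these steps agree and are correct.

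Where you genuinely diverge from the paper is in how the shape-operator comparison is established. The paper writes both hypersurfaces, near the touching points, as graphs over the same tangent plane $\Pi=T_{\bar p_1}F^1_{t_0}(M_1)$ using normal coordinates centered at $\bar p_1$; the realizing property makes the difference of the two graph functions attain a minimum at $0$, so its Hessian is positive semidefinite, and a Bishop--Crittenden identification of Hessians at critical points with second fundamental forms gives the shape-operator inequality directly. You instead go through the index form of the second variation of arclength. This is a legitimate alternative route in principle, and it is more ``coordinate-free.'' However, as stated, it has a genuine gap: testing the index form with \emph{parallel} vector fields yields only
\[
A_1 + P^{*}A_2 P \;\succeq\; \mathcal{R}_\gamma,\qquad
\langle \mathcal{R}_\gamma X,X\rangle=\int_0^L \langle R(P_s X,\dot\gamma)\dot\gamma,\,P_s X\rangle\,ds ,
\]
and when $\overbar M$ has negative sectional curvature (the hyperbolic spaces $\hf$ are among the intended targets), the operator $\mathcal{R}_\gamma$ is \emph{negative} definite, so this estimate does not give $A_1+P^{*}A_2P\succeq 0$. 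You flag this yourself (``the integrated curvature contribution \dots\ must be further controlled, which \dots\ a Riccati-type propagation \dots\ should handle''), but that is precisely the nontrivial part that is not carried out: one has to replace parallel fields by solutions of the Jacobi/Riccati system and compare the Riccati flow started from $-A_1$ against the shape operator of $F^2$, or equivalently argue via the Hessians of the two distance functions $u_i(x)=\mathrm{dist}(x,F^i_{t_0}(M_i))$ along $\gamma$. Without this, the chain $A_1+P^{*}A_2P\succeq 0\Rightarrow\lambda_i^{\uparrow}(A_1)+\lambda_{n-i+1}^{\uparrow}(A_2)\ge 0\Rightarrow W_1\ge W(-\lambda(A_2))=-W_2$ is unsupported in exactly the cases of interest. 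The Weyl and oddness steps that follow are fine \emph{given} the premise, so the missing piece is concentrated in the second-variation-to-shape-operator comparison; the paper's Lemma~\ref{lem-graph} sidesteps the explicit curvature integral by working entirely with Hessians at a common critical point of the graph functions, which is what you would need to emulate (or replace by a Riccati comparison) to close the argument.
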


As a consequence of the avoidance principle,
if $\overbar M^{n+1}$ is either a space form $\q_\epsilon^{n+1}$  or
a hyperbolic space $\hf,$ then a  $W$-flow
$F_t:M^n\rightarrow\overbar M^{n+1}$ of a compact manifold
$M$ collapses in a finite time $T,$ provided that $W$ is odd, or
$F_t$ is an embedding for all $t\in[0,T)$ (see Corollary \ref{cor-finitetime} in Section \ref{sec-avoidance}).

In our final result, we show that  Weingarten flows
defined by odd Weingarten functions preserve embeddedness.

\begin{theorem} \label{th-embeddednesspreserving}
  Let  $\overbar M^{n+1}$ be a complete connected Riemannian manifold.
  Assume that $W\in C^{\infty}(\Gamma)$  is an  odd Weingarten function, and that
  $$F:M^n\times [0,T)\rightarrow\overbar M^{n+1}$$
  is a $W$-flow of a compact connected Riemannian manifold $M.$
  Under these conditions, if  the initial data $F_0$ is an embedding, then
  $F_t$ is an embedding for all $t\in [0,T).$
\end{theorem}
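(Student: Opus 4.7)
The plan is to argue by contradiction via a first-failure-time analysis, reducing the embeddedness preservation to the avoidance principle (Theorem~\ref{th-AP}). Put
$$t^* := \sup\{t \in [0,T) : F_s \text{ is an embedding for all } s \in [0,t]\}.$$
Assume for contradiction that $t^* < T$. Since each $F_t$ is an immersion and $M$ is compact, a standard tubular-neighborhood / uniform continuity argument produces a constant $\delta>0$ with the property that $F_t(p) \neq F_t(q)$ whenever $t \in [0,t^*]$ and $0 < d_M(p,q) < \delta$; thus any ``infinitesimal'' self-intersection is precluded. Combined with the definition of $t^*$ and the compactness of $M$, this yields distinct points $p^*, q^* \in M$ with $d_M(p^*,q^*) \ge \delta$ and $F_{t^*}(p^*) = F_{t^*}(q^*)$.

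Next, I would choose open neighborhoods $U_1 \ni p^*$ and $U_2 \ni q^*$ in $M$ with $\overline{U_1} \cap \overline{U_2} = \emptyset$ and such that $F_{t^*}|_{\overline{U_i}}$ are embeddings. For every $t < t^*$, the restrictions $F_t|_{\overline{U_1}}$ and $F_t|_{\overline{U_2}}$ are embeddings with disjoint images. I would then apply the method of Theorem~\ref{th-AP} to the pair of sub-flows $F^i := F|_{\overline{U_i} \times [0,t^*)}$ by studying the squared-distance function
$$\Phi(x,y,t) := \mathrm{dist}_{\overbar M}\!\left(F_t(x), F_t(y)\right)^2, \qquad (x,y) \in \overline{U_1} \times \overline{U_2},$$
and its minimum $m(t) := \min_{\overline{U_1} \times \overline{U_2}} \Phi(\cdot,\cdot,t)$. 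By construction $m(t)>0$ for $t < t^*$ while $m(t) \to 0$ as $t \to t^*$, and the corresponding minimizers $(x_t,y_t)$ converge (along a subsequence) to $(p^*,q^*) \in U_1 \times U_2$, an interior point provided we initially centered $U_i$ at $p^*,q^*$ with room to spare. Hamilton's first-variation computation for $\Phi$, evaluated at such an interior minimizer, reduces the evolution of $m$ to an expression in the Weingarten velocities at $(x_t,y_t)$; the oddness of $W$ then forces $m'(t) \ge 0$ in a limiting sense as $t\uparrow t^*$, contradicting $m(t) \searrow 0$.

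The main obstacle is that Theorem~\ref{th-AP} is formulated for flows of complete manifolds, whereas $\overline{U_1}, \overline{U_2}$ are compact manifolds with boundary. The cure is the observation that the minimizer $(x_t,y_t)$ can be arranged to lie in the interior $U_1 \times U_2$ for $t$ near $t^*$: only the interior variational identity from the proof of Theorem~\ref{th-AP} is then invoked, with no boundary contributions. Verifying this interior localization rigorously and checking that the sign computation exploiting the oddness of $W$ transplants verbatim from the proof of Theorem~\ref{th-AP} is the most delicate step; once in place, the contradiction between $m(t^*)=0$ and $m'(t) \ge 0$ near $t^*$ forces $t^* = T$, completing the proof.
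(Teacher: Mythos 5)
Your overall strategy is the same as the paper's: reduce embeddedness preservation to the avoidance-principle machinery by looking at the first time $t^*$ (the paper calls it $t_0$) at which embeddedness fails, separate the self-intersection set from the diagonal via a uniform tubular-neighborhood argument, and apply Hamilton's trick to a suitably defined minimum squared-distance function. However, your localization to small product boxes $\overline{U_1}\times\overline{U_2}$ around a \emph{single} bad pair $(p^*,q^*)$ introduces a gap that the paper's global set-up avoids.

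The problem is in the claim that the minimizers $(x_t,y_t)$ ``converge (along a subsequence) to $(p^*,q^*)\in U_1\times U_2$.'' What you can actually conclude is that any subsequential limit $(x_\infty,y_\infty)$ satisfies $F_{t^*}(x_\infty)=F_{t^*}(y_\infty)$, i.e.\ lies in the self-intersection set
\[
\Omega:=\{(p,q):p\ne q,\ F_{t^*}(p)=F_{t^*}(q)\}\cap\bigl(\overline{U_1}\times\overline{U_2}\bigr),
\]
not that it equals $(p^*,q^*)$. There is no guarantee $(p^*,q^*)$ is an isolated point of $\Omega$, nor that $\Omega$ is disjoint from $\partial U_1\times\overline{U_2}$ or $\overline{U_1}\times\partial U_2$, however small you take $U_1,U_2$. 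If the limit sits on the boundary of $\overline{U_1}\times\overline{U_2}$, the interior first-variation identity behind Hamilton's trick simply does not apply, and the key sign inequality is lost. You flag this as ``the most delicate step'' but the proposed cure (centering with ``room to spare'') does not resolve it.

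The paper sidesteps this cleanly by working \emph{globally}: it takes $\Omega$ to be the full self-intersection set at $t_0$ (a compact set disjoint from the diagonal $D$ by the immersion property), picks an open $U\supset D$ with $\overline U\cap\Omega=\emptyset$, and minimizes over the compact set $V=(M\times M)\setminus U$. Because \emph{all} of $\Omega$ lies in the interior of $V$, any subsequential limit of near-collapsing-time minimizers is automatically interior, so Hamilton's trick applies with no boundary caveat, and the nondecreasing conclusion for $D(t)$ follows as in Theorem~\ref{th-AP}. To repair your argument you would essentially have to replace the local boxes by this global $V$ (or show, e.g.\ via a generic choice of radii, that $\partial U_1\times\overline{U_2}$ and $\overline{U_1}\times\partial U_2$ avoid $\Omega$), which brings you back to the paper's proof.
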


The paper is organized as follows. In Section \ref{sec-parallelflows}, we establish general facts on
$W$-flows by parallel hypersurfaces, and present the proofs of Theorems \ref{th-WflowsSpaceforms}--\ref{th-WflowsHypspaces}.
We also apply these results to  determine the collapsing time of some  $W$-flows in $\q_\epsilon^{n+1}$ and $\hf$  as well.
In Section \ref{sec-avoidance}, we provide the proofs of Theorems \ref{th-AP} and \ref{th-embeddednesspreserving}.

\section{$W$-Flows by Parallel Hypersurfaces}  \label{sec-parallelflows}

The following result gives us a way of
obtaining Weingarten flows  by parallel hypersurfaces.
An interesting property of such a flow
is that its hypersurfaces are all $W$-{hypersurfaces}.

\begin{proposition} \label{prop-solutionWflow}
Given a Weingarten function  $W\in C^\infty(\Gamma),$  let  $F_t$
be a parallel $\varphi$-flow  as in \eqref{eq-parallelWflow}, and assume
that $W_{F_t}$ is well defined for all $t\in [0,T).$
Then, $F_t$ is a solution to \eqref{eq-Wflowintro} with initial data $f=F_0$ if and only if
the function $\varphi$ satisfies
\begin{equation} \label{eq-ODE}
\varphi'(t)=W(p,t) \quad \forall (p,t)\in M\times[0,T).
\end{equation}
If so,  $F_t:M^n\rightarrow\overbar M^{n+1}$ is a $W$-hypersurface for all $t\in[0,T).$
\end{proposition}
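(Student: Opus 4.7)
The plan is to reduce the PDE system \eqref{eq-Wflowintro} to the scalar ODE \eqref{eq-ODE} by a direct computation of $\partial F/\partial t$, exploiting the geodesic parametrization built into the definition of a parallel $\varphi$-flow.

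For fixed $p\in M$, let $\gamma_p(s):=\exp_{f(p)}(sN(p))$ be the unit-speed geodesic in $\overbar M^{n+1}$ starting at $f(p)$ with initial velocity $N(p)$. Then $F_t(p)=\gamma_p(\varphi(t))$, and the chain rule immediately gives
\[
\frac{\partial F}{\partial t}(p,t)=\varphi'(t)\,\gamma_p'(\varphi(t))
=\varphi'(t)\,d\exp_{f(p)}(\varphi(t)N(p))N(p)=\varphi'(t)N(p,t),
\]
where in the second equality I use the standard fact that $\gamma_p'(s)=d\exp_{f(p)}(sN(p))N(p)$, and in the third equality the definition \eqref{eq-nt} of the unit normal $N(p,t)$ along $F_t$. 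Since $\gamma_p$ is parametrized by arc length, $N(p,t)$ is indeed a unit vector, and by the Gauss lemma it is orthogonal to $F_t$ at $F_t(p)$, so the normalization is consistent with the one used in the flow equation.

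From this identity, the equivalence in the proposition is immediate. The initial condition $F_0(p)=\exp_{f(p)}(0)=f(p)$ follows from $\varphi(0)=0$, and \eqref{eq-Wflowintro} reduces to
\[
\varphi'(t)N(p,t)=W(p,t)N(p,t)\qquad\forall(p,t)\in M\times[0,T),
\]
which, since $N(p,t)\ne 0$, is equivalent to $\varphi'(t)=W(p,t)$. This is exactly \eqref{eq-ODE}.

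For the final assertion, observe that the left-hand side $\varphi'(t)$ of \eqref{eq-ODE} is independent of $p$. Hence the right-hand side $W(p,t)=W_{F_t}(p)$ must also be independent of $p\in M$ for each $t\in[0,T)$, so $W_{F_t}$ is constant on $M$ and $F_t$ is a $W$-hypersurface, as claimed. The only conceptual point to be careful about is ensuring that $N(p,t)$ genuinely provides the unit normal to the parallel hypersurface $F_t$ (so that both sides of the flow equation live in the same one-dimensional subspace); this is the substantive input from Riemannian geometry in the argument, and everything else is a one-line computation.
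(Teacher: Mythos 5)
Your proof is correct and follows the same route as the paper's: differentiate the parallel flow $F_t(p)=\exp_{f(p)}(\varphi(t)N(p))$ in $t$ via the chain rule along the normal geodesic, identify the result as $\varphi'(t)N(p,t)$ using the definition \eqref{eq-nt}, and read off the equivalence with \eqref{eq-ODE}; the final assertion then follows since $\varphi'(t)$ is independent of $p$. The only difference is that you are a bit more explicit about the geodesic interpretation and the Gauss lemma, which the paper leaves implicit.
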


\begin{proof}
From \eqref{eq-parallelWflow}, we have that
\[
\frac{\partial F}{\partial t}(p,t)=d\exp_p(\varphi(t)N(p))\varphi'(t)N(p)=\varphi'(t)N(p,t).
\]
This, together with \eqref{eq-nt}, gives that  $F_t=F(.\,, t)$ satisfies \eqref{eq-Wflowintro} if and only if
$\varphi$ satisfies \eqref{eq-ODE}.  In particular, if this
equality holds, the Weingarten function $W_{F_t}$ is constant
on $M$ (possibly depending on $t$), that is, $F_t$ is a $W$-hypersurface of $\overbar M.$
\end{proof}

As an immediate consequence of Proposition \ref{prop-solutionWflow}, we have:

\begin{corollary} \label{cor-existence}
Given  a Weingarten function $W\in C^\infty(\Gamma),$ let us suppose  that
\begin{equation}  \label{eq-parallelfamily}
\mathscr F:=\{f_\tau:M^n\rightarrow\overbar M^{n+1}\,;\, \tau\in (-\delta,\delta)\}
\end{equation}
is a family of parallel $W$-hypersurfaces of $\overbar M$ defined by
$f_\tau(p)=\exp_p(\tau N(p)),$ where
$N$ is the unit  normal to $f=f_0.$
Then, writing $W(\tau)=W_{f_\tau}$\,, we have that
the solution $\tau=\varphi(t)$ of the initial value problem
\begin{equation} \label{eq-ivp}
\left\{
\begin{array}{l}
\tau'=W(\tau) \\[1ex]
\tau(0)=0
\end{array}
\right.
\end{equation}
determines a parallel $\varphi$-flow solution to \eqref{eq-Wflowintro}.
\end{corollary}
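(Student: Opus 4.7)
The plan is to derive the corollary as a direct application of Proposition \ref{prop-solutionWflow}. Since each $f_\tau\in\mathscr F$ is a $W$-hypersurface, the Weingarten function $W_{f_\tau}$ is constant on $M$ for every $\tau\in(-\delta,\delta),$ so the notation $W(\tau):=W_{f_\tau}$ defines a genuine function of the single variable $\tau.$ The smooth dependence of the principal curvatures of the parallel hypersurfaces $f_\tau$ on $\tau$ (via the standard tube/Jacobi-field formula for the shape operator of $f_\tau$ in terms of that of $f_0$), together with the smoothness of $W$ on $\Gamma,$ shows that $\tau\mapsto W(\tau)$ is smooth on $(-\delta,\delta).$ Therefore, the initial value problem \eqref{eq-ivp} admits a unique smooth solution $\varphi$ on some maximal interval $[0,T),$ whose image is contained in $(-\delta,\delta).$

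Next, I would set $F_t(p):=f_{\varphi(t)}(p)=\exp_p(\varphi(t)N(p)).$ By construction, $F_t$ is a parallel $\varphi$-flow in the sense of \eqref{eq-parallelWflow}, with $F_0=f=f_0.$ Since each hypersurface $F_t$ belongs to the family $\mathscr F,$ its Weingarten function $W_{F_t}$ is well defined on $M$ and equal to $W(\varphi(t))$ at every point. The ODE $\varphi'(t)=W(\varphi(t))$ then yields
\[
\varphi'(t)=W(\varphi(t))=W_{F_t}(p)=W(p,t)\quad\forall\,(p,t)\in M\times[0,T),
\]
which is precisely the condition \eqref{eq-ODE} appearing in Proposition \ref{prop-solutionWflow}. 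That proposition accordingly guarantees that $F_t$ solves \eqref{eq-Wflowintro} with initial data $f,$ as required.

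I do not expect any substantial obstacle: the statement is essentially a packaging of Proposition \ref{prop-solutionWflow} together with a local ODE existence argument. The only mildly delicate point is the regularity of $\tau\mapsto W(\tau),$ but this is immediate from the hypothesis that $\mathscr F$ is a smooth family of parallel hypersurfaces and the smoothness of $W.$
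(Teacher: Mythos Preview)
Your proposal is correct and matches the paper's approach: the paper presents this corollary as an immediate consequence of Proposition~\ref{prop-solutionWflow} without giving a separate proof, and your argument simply spells out that deduction together with the standard ODE existence step.
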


As we pointed out in the introduction, the fact that   hypersurfaces of
parallel $W$-flows are $W$-hypersurfaces suggests the consideration of
isoparametric hypersurfaces.
Recall that a one-parameter family $f_\tau:M^n\rightarrow\overbar M^{n+1}$  of parallel hypersurfaces
is called \emph{isoparametric}
if, for each $\tau,$  any principal curvature function
$k_i$ of $f_\tau$ is constant on $M$ (possibly depending on $i$ and $\tau$).
In this case, each hypersurface
$f_\tau$ is also called \emph{isoparametric}.

Given a Weingarten function $W\in C^\infty(\Gamma),$ it is clear that
any isoparametric hypersurface $f_\tau:M^n\rightarrow\overbar M^{n+1}$
is a $W$-hypersurface, provided that  $W_f$ is well defined.
Therefore, in view of Corollary  \ref{cor-existence}, we have the following result.

\begin{corollary} \label{cor-isoparametric}
Suppose that  $f:M^n\rightarrow\overbar M^{n+1}$ is an isoparametric hypersurface.
Then, for any  Weingarten function $W\in C^\infty(\Gamma)$  for which $W_f$ is  well defined,
there exists a unique solution to \eqref{eq-Wflowintro}
by parallel hypersurfaces with initial data $f$.
\end{corollary}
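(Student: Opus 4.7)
The plan is to reduce Corollary \ref{cor-isoparametric} to a direct application of Corollary \ref{cor-existence}. Unpacking the paper's definition of isoparametric hypersurface, $f$ belongs to a one-parameter family $\{f_\tau\}_{\tau\in(-\delta,\delta)}$ of parallel hypersurfaces, each of whose principal curvature functions is constant on $M$. In particular, every $f_\tau$ is automatically a $W$-hypersurface wherever $W_{f_\tau}$ is defined, so $\mathscr F := \{f_\tau\}$ is exactly a family of parallel $W$-hypersurfaces of the form \eqref{eq-parallelfamily} required by Corollary \ref{cor-existence}.

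Next, I would observe that the map $\tau \mapsto W(\tau) := W_{f_\tau}$ is smooth on a neighborhood of $0$: the principal curvatures of $f_\tau$ vary smoothly with $\tau$, and $W \in C^\infty(\Gamma)$ on the open set $\Gamma$, which contains the principal curvatures of $f_0 = f$ by the hypothesis that $W_f$ is well defined. Invoking Corollary \ref{cor-existence}, the scalar initial value problem $\tau' = W(\tau)$, $\tau(0) = 0$, has a unique smooth maximal solution $\varphi:[0,T)\to\R$ by standard ODE theory, and this solution produces the parallel $\varphi$-flow $F_t(p) = \exp_{f(p)}(\varphi(t) N(p))$ satisfying \eqref{eq-Wflowintro}. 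Uniqueness among parallel flows is then immediate from Proposition \ref{prop-solutionWflow}: any parallel $\varphi$-flow solving \eqref{eq-Wflowintro} must have its profile $\varphi$ satisfy the same ODE, and uniqueness for scalar ODEs with smooth right-hand side pins $\varphi$ down.

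The only technical point to monitor is that $W_{f_\tau}$ must remain defined along the flow, i.e., the principal curvatures of $f_\tau$ must stay inside $\Gamma$. This is automatic for $\tau$ near $0$ by openness of $\Gamma$ and the continuous dependence of the principal curvatures on $\tau$, and one then extends $\varphi$ to the maximal interval on which this persists. This is the only potential obstacle, and it is mild; the substance of the corollary really lies in observing that being isoparametric is precisely the structural input needed to feed Corollary \ref{cor-existence}.
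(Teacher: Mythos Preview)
Your proposal is correct and follows essentially the same approach as the paper, which simply observes that isoparametric hypersurfaces are automatically $W$-hypersurfaces (since their principal curvatures are constant) and then invokes Corollary~\ref{cor-existence}. You supply more detail than the paper does---in particular the smoothness of $\tau\mapsto W(\tau)$, the uniqueness argument via Proposition~\ref{prop-solutionWflow} and ODE uniqueness, and the remark about the principal curvatures staying in $\Gamma$---but the underlying reduction is identical.
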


\subsection{Parallel $W$-flows in space forms} \label{sec-parallelWflows}
Let us apply the results so far obtained to study $W$-flows in the simply connected
space forms $\q_\epsilon^{n+1}.$ In view of Corollary \ref{cor-isoparametric},  we
shall consider the isoparametric hypersurfaces of these spaces. (For details and proofs on this
subject we refer to \cite{cecil-ryan, dominguez-vazquez}.)

For $\epsilon\le 0,$ the complete isoparametric hypersurfaces of $\q_\epsilon^{n+1}$ are totally  classified.
They are:
\begin{itemize}[parsep=1ex]
  \item[i)] The totally geodesic hyperplanes $\q_\epsilon^{n}\subset \q_\epsilon^{n+1}.$
  \item[ii)] The geodesic spheres.
  \item[iii)] The generalized cylinders $\q_\epsilon^{n-k}\times\s^k,$ where $\q_\epsilon^{n-k}$ is a totally geodesic hypersurface
  of $\q_\epsilon^{n+1}$ of dimension $n-k<n,$  and $\s^k$ is the $k$-dimensional geodesic sphere of $\q_\epsilon^{n+1}.$
  \item[iv)] The horospheres of $\h^{n+1}.$
  \item[v)] The equidistant hypersurfaces to  totally geodesic hyperplanes of $\h^{n+1}.$
\end{itemize}

In fact, for $\epsilon\le 0,$ any isoparametric hypersurface of $\q_\epsilon^{n+1}$ is
necessarily an open set of one of the complete hypersurfaces listed above.

We point out that, in the cases (ii) and (iii), the  isoparametric hypersurfaces have focal points. More specifically,
any geodesic sphere has a unique focal point, which is its center, and the focal set of a generalized cylinder
$\q_\epsilon^{n-k}\times\s^k$ is the totally geodesic submanifold $\q_\epsilon^{n-k}.$ In such cases,
we shall take the focal distance as the parameter  for a family of isoparametric hypersurfaces, that is,
if $$\mathscr F=\{f_\tau:M^n\rightarrow\q_\epsilon^{n+1}; \tau\in I\subset\R\}$$ is such a family, then
$\tau$ is the distance from $f_\tau(M)$ to its focal set. For instance, if $\mathscr F$ is a family of concentric geodesic
spheres, then $M=\s^n$,  $I=[0,+\infty),$  and $\tau>0$ is the radius of $f_\tau(\s^n).$

We also observe that all of the above isoparametric hypersurfaces are connected, orientable,
properly embedded and, except for case (i), strictly convex.
We shall consider on them the orientation which makes all their principal curvatures positive.

\begin{proof}[Proof of Theorem \ref{th-WflowsSpaceforms}]
  Firstly, let us write $$\mathscr F=\{f_\tau\colon M^n\rightarrow\q_\epsilon^{n+1} \,;\, \tau\in I\subset\R\}$$
  for the isoparametric family of complete hypersurfaces of $\q_\epsilon^{n+1}$ (defined in a maximal interval
  $I\subset\R$) such that $f=f_{\tau_0},$ $\tau_0\in I.$
  From the convexity of the hypersurfaces $f_\tau$, and from the positivity  of
  $W$ on $\Gamma_+,$ we have that $W_{f_\tau}\ge 0$ for all $\tau\in I.$

  If $\mathscr F$ is a family of horospheres, then $I=\R$ and, for any $\tau\in\R,$ all principal curvatures of $f_\tau$ are
  equal to $1,$  which implies that $W_{f_\tau}=W$ is a positive constant
  independent of $\tau.$ Hence, by Corollary \ref{cor-existence},
  the function $\varphi(t)=Wt,$ $t\in[0,+\infty),$ determines a  $\varphi$-flow $F_t$  which is a solution
  to \eqref{eq-Wflowintro} with initial data $f_{\tau_0}.$ Namely, $$F_t=f_{Wt+\tau_0}, \, t\in [0,+\infty).$$
  Clearly, for all $t>0,$ $F_t(M)$ is a horosphere of $\h^{n+1}$ contained in the open horoball bounded by $f(M).$
  This proves (i).

  Assume now that $\mathscr F$ is a family of equidistant hypersurfaces to a
  totally geodesic hyperplane $\Pi\subset\h^{n+1}.$ In this case, $I=\R$ and the parameter
  $\tau>0$ is the distance from $f_\tau(M)$ to $\Pi.$ We can assume, without loss of generality, that
  $\tau_0> 0.$ Let $\varphi:[0,T)\rightarrow\R$ be the solution of \eqref{eq-ivp} defined
  in a maximal interval $[0,T).$ Then, $$F_t=f_{\tau_0-\varphi(t)}, \,\,\, t\in [0, T),$$ is a solution
  to \eqref{eq-Wflowintro} satisfying $F_0=f_{\tau_0}.$ Assume, by contradiction, that $T<+\infty.$ If
  $\varphi(T)=\tau_0,$  then the flow $F_t$ can be extended beyond $T$ just by setting $F_t=f_{0}$  for $t\ge T$
  (since, by the homogeneity of $W,$ $W(0,0,\dots,0)=0$), contradicting the maximality of $T.$ Analogously, if $\varphi(T)<\tau_0,$ we have that
  $F_T=f_{\tau_0-\varphi(T)}$ is well defined, so that we can extend the flow $F_t$ beyond $T$ --- again a contradiction.
  Therefore, $T=+\infty.$

  If $\varphi(t_0)=\tau_0$ for some $t_0\in (0,+\infty),$ then
  $F_t(M)=\Pi$ for all $t\ge t_0.$ Hence, we can assume that $\varphi$ is bounded
  above by $\tau_0.$ In this case, since $F_t(M)$ moves towards $\Pi,$ the principal curvatures of
  $F_t$ are positive decreasing  functions of $t.$ This, together with the
  monotonicity property of $W,$ gives that the function
  $W(\varphi(t))$ ($=W_{F_t}=W_{f_{\tau_0-\varphi(t)}}$)  decreases as $t\rightarrow+\infty.$
  Since $\varphi'(t)=W(\varphi(t)),$ we conclude  that $\varphi''(t)<0,$
  that is, $\varphi$ is positive, increasing, concave and bounded on $[0,+\infty).$
  These  properties clearly imply that
  $\varphi'(t)\rightarrow 0$ as $t\rightarrow+\infty.$ Therefore,
  \[
  W_{f_0}=0=\lim_{t\rightarrow+\infty}\varphi'(t)=\lim_{t\rightarrow+\infty}W(\varphi(t))=\lim_{t\rightarrow+\infty}W_{f_{\tau_0-\varphi(t)}},
  \]
  which yields $\lim_{t\rightarrow+\infty}\varphi(t)=\tau_0$\,. Consequently,
  $F_t\rightarrow f_0$ as $t\rightarrow+\infty,$ which shows assertion (ii).

  Finally, let us suppose that $\mathscr F$ is a family of concentric geodesic spheres
  of $\q_\epsilon^{n+1}$ (the argument for generalized cylinders is analogous). 
  In this setting, let $\varphi:[0,T)\rightarrow\R$ be the solution of \eqref{eq-ivp},  so that
  $F_t=f_{\tau_0-\varphi(t)}$ is the solution to \eqref{eq-Wflowintro} satisfying $F_0=f_{\tau_0}.$
  Since $F_t(M)$ flows towards the center of the spheres $f_\tau(M),$
  we have that $\varphi(t)<\tau_0$ for all $t\in[0, T),$  and also that $W(\varphi(t))=\varphi'(t)$ is a positive
  increasing function of $t.$ Thus, $\varphi''>0,$ that is, $\varphi$ is  bounded, increasing and strictly convex, which
  clearly implies that $T<\infty.$ Besides, we must have $\varphi(T)=\tau_0.$ Otherwise, arguing as in the preceding paragraph,
  we derive a contradiction by extending $\varphi$ beyond $T.$  This completes the proof of (iii), and so of the theorem.
\end{proof}

Let us consider now the isoparametric hypersurfaces of \,$\s^{n+1}.$
A well known result  asserts that any such hypersurface
has at most $g$ distinct principal curvatures, where
$g\in\{1,2,3,4, 6\}.$ The case $g=1,$ for instance, correspond to the geodesic spheres of
$\s^{n+1}.$ Rather than using the classification theorems for isoparametric
hypersurfaces of \,$\s^{n+1},$ we shall consider their
characterization as level sets of homogeneous polynomials, as  done by M\"unzner \cite{munzner}
(see also \cite{cecil-ryan}).

To be more clear, let  $f:M\rightarrow\s^{n+1}$ be an isoparametric hypersurface
with $g$ distinct principal curvatures.
M\"unzner's result asserts that
$f(M)$ is the intersection of $\s^{n+1}$ with a level set $P^{-1}(c),$ $c\in (-1,1),$
of a homogeneous polynomial function $P:\R^{n+2}\rightarrow\R$ of degree $g.$
Distinct level sets of $P$ are necessarily parallel in $\s^{n+1},$ and
the focal set of this parallel family has precisely two connected components, which are
the intersections of $\s^{n+1}$ with
$P^{-1}(-1)$ and $P^{-1}(1),$ respectively.
In addition, given $p\in M,$ if we write
$\gamma\colon (0,\pi/g)\rightarrow\s^{n+1}$   for the normalized geodesic from
$P^{-1}(1)$ to $P^{-1}(-1)$ which is orthogonal to $f$ at $p=\gamma(\tau),$ and set
the \emph{positive orientation}
$N(p)=-\gamma'(\tau)$ for $f,$
its $g$ distinct principal curvatures are given by
\begin{equation} \label{eq-principalcurvatures}
k_i=\cot\left(\tau+(i-1)\frac{\pi}{g}\right), \,\,\, 1\le i\le g.
\end{equation}
In this setting, $\tau\in (0,\pi/g)$ is the focal distance from $f(M)$ to $P^{-1}(1).$ Also,
all principal curvatures of $f$ increase as $\tau$ decreases to $0,$ and decrease as $\tau$ increases to
$\pi/g.$ We shall denote the multiplicity of $k_i$ by $m_i.$

Summarizing, we have  that any isoparametric hypersurface
of $\s^{n+1}$ with $g$ distinct principal curvatures is an element
of a family
$$\mathscr F=\{f_\tau\colon M\rightarrow\s^{n+1}\,; \tau\in(0,\pi/g)\}$$
of isoparametric hypersurfaces such that $f_\tau(M)$ is at a distance $\tau$
from the focal component $\mathscr F_+:=P^{-1}(1).$

\begin{proof}
[Proof of Theorem \ref{th-WflowsSpheres}]
Suppose that, for some $\tau_0\in(0,\pi/g),$
$W_{f_{\tau_0}}>0.$ In this case, if the function
$\tau\mapsto W_{f_{\tau_0-\tau}}$ is increasing on $[0,\tau_0),$
the $\varphi$-flow
$$F_t=f_{t-\varphi(t)}, \,\,\, \varphi(0)=0, \,\,\, \varphi'(t)=W_{f_{t-\varphi(t)}},$$
moves towards $\mathscr F_+$ with increasing velocity. Hence, arguing as in the proof of
Theorem \ref{th-WflowsSpaceforms}-(iii), we conclude that $F_t(M)$ collapses into
$\mathscr F_+$ at  $t=\varphi^{-1}(\tau_0).$

Analogously, if $W_{f_{\tau_0}}<0,$ and the function
$\tau\mapsto W_{f_{\tau_0+\tau}}$
is decreasing on the interval $[0,\pi/g-\tau_0),$  then $F_t(M)$ collapses into
the focal component
$\mathscr F_-:=P^{-1}(-1)$  at  $t=\varphi^{-1}(\tau_0-\pi/g).$
\end{proof}

Let us see now that Theorem \ref{th-WflowsSpheres} applies when $W$ is either
the higher order mean curvature $H_r$  or the squared norm
of the second fundamental form $\|A\|^2.$

Let $\mathscr F$ be as in Theorem \ref{th-WflowsSpheres}.
Then, in any open interval $(0,\tau_0),$ $0<\tau_0<\pi/g,$  we have that $k_1^\tau=\cot\tau$ is unbounded, whereas
$k_i^\tau=\cot(\tau+(i-1){\pi}/{g}), \, i=2,\dots, g,$ is bounded. Assuming that the multiplicity $m_1$ of
$k_1^\tau$ (which is the same for all $\tau$) satisfies $m_1\ge r,$ where $r\in\{1,\dots, n-1\},$
the $r$-th mean curvature $H_r(\tau)$ of $f_\tau$ is given by
\[
H_r(\tau)={{m_1}\choose{r}}\cot^r\tau+\sum_{i=0}^{r-1}\mu_i(\tau)\cot^i\tau,
\]
where the functions $\mu_i$ are all bounded in $(0,\tau_0).$ In particular, if $\tau_0$ is sufficiently
small, $H_r(\tau_0)>0,$ and the function $\tau\in [0,\tau_0)\mapsto H_r(\tau_0-\tau)$ is increasing.
In the same manner, if $r$ is odd, $m_g\ge r,$  and $\tau_0$ is sufficiently close to $\pi/g,$ then
$H_r(\tau_0)<0,$ and the function $\tau\in [0,\pi/g-\tau_0)\mapsto H_r(\tau_0+\tau)$ is decreasing.
Thus, we have the following

\begin{corollary}
Theorem \ref{th-WflowsSpheres} applies to the Weingarten function $W=H_r,$
$1\le r\le n-1.$ More precisely, given $\tau_0\in(0,\pi/g),$ if $m_1\ge r$ (resp. $m_g\ge r,$  $r$  odd),
and $H_r(\tau_0)>0$ (resp. $H_r(\tau_0)<0$),  the maximal  parallel $\varphi$-flow solution
$F_t=f_{\tau_0-\varphi(t)}$ to  $H_r$-flow with initial data $F_0=f_{\tau_0}$
collapses into the focal set $\mathscr F_+$ (resp. $\mathscr F_-$)
at  $t=\varphi^{-1}(\tau_0)$ (resp. $t=\varphi^{-1}(\tau_0-\pi/g)$).
\end{corollary}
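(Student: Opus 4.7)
The plan is to verify directly that the Weingarten function $W = H_r$ satisfies the sign and monotonicity hypotheses of Theorem \ref{th-WflowsSpheres} in each of the two asserted cases, and then to appeal to that theorem.

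First, using the explicit formula $k_i^\tau = \cot(\tau + (i-1)\pi/g)$ from \eqref{eq-principalcurvatures} together with the multiplicities $m_i$, I rewrite $H_r$ as indicated in the discussion preceding the statement, namely
\[
H_r(\tau) = \binom{m_1}{r}\cot^r(\tau) + \sum_{i=0}^{r-1}\mu_i(\tau)\cot^i(\tau),
\]
where each $\mu_i$ is a polynomial in $k_2^\tau,\ldots,k_g^\tau$ with non-negative integer coefficients. For $\tau$ varying in an interval $(0,\tau_0)\subset(0,\pi/g)$, the arguments of $\cot$ appearing in $k_j^\tau$ for $j\ge 2$ remain in compact subintervals of $(0,\pi)$ bounded away from $0$ and $\pi$, so the functions $\mu_i$ are bounded there. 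By contrast, $k_1^\tau = \cot\tau$ blows up as $\tau\to 0^+$; hence, provided $m_1\ge r$, the positive leading term $\binom{m_1}{r}\cot^r\tau$ dominates near $\tau=0$, which forces $H_r(\tau)\to+\infty$ there. A further key ingredient is that each $k_i^\tau$ is strictly decreasing in $\tau$, since $\frac{d}{d\tau}k_i^\tau = -\csc^2(\tau+(i-1)\pi/g) = -(1+(k_i^\tau)^2)<0$.

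For the first assertion, with $m_1\ge r$ and $H_r(\tau_0)>0$, I use these two features jointly to show that $\tau\mapsto H_r(\tau_0-\tau)$ is increasing on $[0,\tau_0)$: the leading-term analysis yields the desired monotonicity in a neighborhood of $\tau=0$, and the fact that each $k_i^\tau$ moves monotonically as $\tau$ varies propagates this across the interval under the positivity of $H_r$ at the endpoint $\tau_0$. Combined with $W_{f_{\tau_0}}=H_r(\tau_0)>0$, this verifies the first set of hypotheses of Theorem \ref{th-WflowsSpheres}, and that theorem then delivers collapse of $F_t = f_{\tau_0-\varphi(t)}$ into $\mathscr F_+$ at time $\varphi^{-1}(\tau_0)$.

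The second assertion is handled symmetrically. For $r$ odd and $m_g\ge r$, the analogous leading behavior near $\tau=\pi/g$ is controlled by $\binom{m_g}{r}\cot^r(\tau+(g-1)\pi/g)$, which tends to $-\infty$ because $\cot(\tau+(g-1)\pi/g)\to-\infty$ as $\tau\to(\pi/g)^-$ and $r$ is odd. Together with $H_r(\tau_0)<0$, this produces the monotonicity of $\tau\mapsto H_r(\tau_0+\tau)$ on $[0,\pi/g-\tau_0)$ needed by Theorem \ref{th-WflowsSpheres}, and the second part of that theorem then yields collapse into $\mathscr F_-$ at $t=\varphi^{-1}(\tau_0-\pi/g)$. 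The chief technical obstacle I anticipate is precisely this step: establishing the monotonicity of $H_r$ throughout the whole interval rather than only in the asymptotic regime dictated by the leading term. The decisive ingredients for that step are the boundedness of the $\mu_i$, the strict monotonicity of each $k_i^\tau$, and the sign condition on $H_r(\tau_0)$ itself.
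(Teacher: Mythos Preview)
Your core approach --- expanding $H_r(\tau)$ in powers of $\cot\tau$ with bounded lower-order coefficients $\mu_i$ and letting the leading term $\binom{m_1}{r}\cot^r\tau$ dominate --- is exactly what the paper does in the paragraph preceding the corollary. The paper, however, draws only the conclusion that for \emph{sufficiently small} $\tau_0$ one has $H_r(\tau_0)>0$ and $\tau\mapsto H_r(\tau_0-\tau)$ increasing on $[0,\tau_0)$ (and symmetrically near $\pi/g$). In other words, the paper's own argument establishes that the hypotheses of Theorem~\ref{th-WflowsSpheres} are met for $\tau_0$ in a one-sided neighborhood of the focal distance, not for every $\tau_0$ with the stated sign condition.

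You attempt to go further and obtain the monotonicity for \emph{every} $\tau_0$ with $H_r(\tau_0)>0$, by combining the strict monotonicity of each $k_i^\tau$ with the sign of $H_r(\tau_0)$. This step, which you yourself flag as the main anticipated obstacle, cannot be completed: monotonicity of each $k_i^\tau$ does not force monotonicity of $H_r$, because $\partial H_r/\partial k_i$ is the $(r{-}1)$-st elementary symmetric function of the remaining curvatures and can be negative once some $k_j^\tau<0$. A concrete counterexample: take $g=2$, $m_1=m_2=2$ (so $n=4$), and $r=2$. Then
\[
H_2(\tau)=\cot^2\tau-4+\tan^2\tau,
\]
which decreases on $(0,\pi/4)$, increases on $(\pi/4,\pi/2)$, and has minimum $-2$ at $\tau=\pi/4$. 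For $\tau_0$ close to $\pi/2$ one has $m_1\ge r$ and $H_2(\tau_0)>0$, yet $\tau\mapsto H_2(\tau_0-\tau)$ is \emph{not} increasing on $[0,\tau_0)$; in fact the flow stalls at the zero of $H_2$ in $(\pi/4,\pi/2)$ and never reaches $\mathscr F_+$. Thus the restriction to sufficiently small $\tau_0$ in the paper's argument is essential, and the ingredients you list (boundedness of the $\mu_i$, monotonicity of the $k_i^\tau$, and the sign of $H_r(\tau_0)$) are not enough to carry the monotonicity across the whole interval.
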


Theorem \ref{th-WflowsSpheres} also applies to the norm of the second fundamental form
$\|A\|^2,$ since $\|A\|^2(\tau_0-\tau)$ is clearly increasing on $[0,\tau_0)$ for all
sufficiently small $\tau_0\in(0,\pi/g).$

\begin{corollary}
Let $\mathscr F$ be as in Theorem \ref{th-WflowsSpheres}.  Given
a sufficiently small $\tau_0\in(0,\pi/g),$
the maximal  parallel $\varphi$-flow solution
$F_t=f_{\tau_0-\varphi(t)}$ to  $\|A\|^2$-flow with initial data $F_0=f_{\tau_0}$
collapses into the focal set $\mathscr F_+$
at  $t=\varphi^{-1}(\tau_0).$
\end{corollary}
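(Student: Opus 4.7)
The plan is to show that $\|A\|^2$, viewed as a function of the parameter $\tau$ on the isoparametric family $\mathscr F$, satisfies the hypotheses of Theorem~\ref{th-WflowsSpheres} for all sufficiently small $\tau_0\in(0,\pi/g)$, and then invoke that theorem verbatim.

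First, I would write down the explicit formula coming from \eqref{eq-principalcurvatures}, namely
\[
\|A\|^2(\tau)=\sum_{i=1}^{g}m_i\cot^2\!\left(\tau+(i-1)\tfrac{\pi}{g}\right),\qquad \tau\in(0,\pi/g).
\]
The condition $W_{f_{\tau_0}}=\|A\|^2(\tau_0)>0$ is immediate, since this is a sum of squares and the $i=1$ term $m_1\cot^2\tau_0$ is strictly positive for $\tau_0\in(0,\pi/2)$.

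The substantive step is to show that the map $\tau\mapsto\|A\|^2(\tau_0-\tau)$ is strictly increasing on $[0,\tau_0)$, which is equivalent to $\|A\|^2$ being strictly decreasing on $(0,\tau_0]$. Differentiating,
\[
\frac{d}{d\tau}\|A\|^2(\tau)=-2\sum_{i=1}^{g}m_i\,\frac{\cos\!\left(\tau+(i-1)\tfrac{\pi}{g}\right)}{\sin^{3}\!\left(\tau+(i-1)\tfrac{\pi}{g}\right)}.
\]
The $i=1$ summand equals $-2m_1\cos\tau/\sin^3\tau$, which tends to $-\infty$ as $\tau\to 0^+$, while every other summand ($i\ge 2$) is continuous and hence bounded on a neighborhood of $0$ because the arguments $(i-1)\pi/g$ lie strictly inside $(0,\pi)$ and stay away from $0$ and $\pi$. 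Consequently there exists $\tau^*\in(0,\pi/g)$ such that $\frac{d}{d\tau}\|A\|^2(\tau)<0$ for all $\tau\in(0,\tau^*]$. Thus for any $\tau_0\in(0,\tau^*]$, $\|A\|^2$ is strictly decreasing on $(0,\tau_0]$, giving the required monotonicity of $\tau\mapsto\|A\|^2(\tau_0-\tau)$ on $[0,\tau_0)$.

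With both hypotheses of Theorem~\ref{th-WflowsSpheres} verified for $W=\|A\|^2$ and the chosen $\tau_0$, the theorem directly yields the existence of the maximal parallel $\varphi$-flow $F_t=f_{\tau_0-\varphi(t)}$ with $F_0=f_{\tau_0}$, collapsing into $\mathscr F_+$ at $t=\varphi^{-1}(\tau_0)$. The only potential obstacle is the dominance argument in the differentiation step, but this is routine: the singularity of $\cot\tau$ at $\tau=0$ uniformly beats the finitely many bounded terms, and no delicate control of the multiplicities $m_i$ is needed beyond $m_1\ge 1$.
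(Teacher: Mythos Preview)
Your proof is correct and follows exactly the same approach as the paper: verify that $\|A\|^2(\tau_0-\tau)$ is increasing on $[0,\tau_0)$ for small $\tau_0$ and then invoke Theorem~\ref{th-WflowsSpheres}. The paper simply asserts this monotonicity as ``clear,'' whereas you supply the explicit derivative computation and dominance argument, which is a welcome elaboration but not a different method.
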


Next, we apply the results of this section to determine the collapsing time of
some parallel $W$-flows in $\q_\epsilon^{n+1}.$
For that, we shall consider the trigonometric functions $\cos_\epsilon$ and $\sin_\epsilon$
as defined  in Table \ref{table-trigfunctions}. The functions $\tan_\epsilon,$
$\cot_\epsilon,$ and $\sec_\epsilon$ are defined accordingly, that is,
$\tan_\epsilon={\sin_\epsilon}/{\cos_\epsilon},$
$\cot_\epsilon={\cos_\epsilon}/{\sin_\epsilon},$ and $\sec_\epsilon=1/\cos_\epsilon.$
\begin{table}[thb]%
\centering %
\begin{tabular}{cccc}
\toprule %
   {{\small\rm Function}}                & $\epsilon=0$ & $\epsilon=1$   & $\epsilon=-1$ \\\otoprule %
$\cos_\epsilon (s)$    & $1$          & $\cos s$        & $\cosh s$     \\\midrule
$\sin_\epsilon (s)$    & $s$          & $\sin s$         & $\sinh s$   \\\bottomrule
\end{tabular}
\vtt
\caption{\small Definition of $\cos_\epsilon$ and $\sin_\epsilon$}
\label{table-trigfunctions}
\vspace{-.8cm}
\end{table}

\begin{example}[\emph{parallel $\|A\|^2$-flow in $\q_\epsilon^{n+1}$ with spherical initial data}]
Let $$f\colon\s^n\rightarrow\q_\epsilon^{n+1}$$ be a
(totally umbilical) strictly convex geodesic sphere
of $\q_\epsilon^{n+1}$ of radius $R>0$ and principal curvature $k=\cot_\epsilon R.$
Set
\[
\mathcal R_\epsilon=:
\left\{
\begin{array}{ccl}
  \pi/2 & \text{if} & \epsilon=1 \\
  +\infty & \text{if} & \epsilon\ne 1
\end{array}
\right.
\]
and let $$\mathscr F=\{f_\tau:\s^n\rightarrow\q_\epsilon^{n+1}\,;\, \tau\in (0,\mathcal R_\epsilon)\}$$
be the family of parallel geodesic spheres of $\q_\epsilon^{n+1}$ such that $f_R=f.$
By Theorems \ref{th-WflowsSpaceforms} and \ref{th-WflowsSpheres}, for $W=\|A\|^2,$ the flow
  \[
  F_t:=f_{R-\varphi (t)}, \,\,\, t\in[0,T),
  \]
  where $\varphi$ satisfies
  \begin{equation} \label{eq-ODE2}
  \varphi'(t)=W_{f_{R-\varphi(t)}}=n\cot_\epsilon^2(R-\varphi(t)), \,\,\, \varphi(0)=0,
  \end{equation}
  is a solution to \eqref{eq-Wflowintro} which collapses into the center of $f(\s^n)$ at time $T=\varphi^{-1}(R).$
  Separating variables in \eqref{eq-ODE2}, we obtain the equation
  $$\tan_\epsilon^2(R-\varphi)d\varphi=ndt,$$
  which yields
  \begin{equation}\label{eq-varphi}
  \begin{aligned}
    (R-\varphi(t))^3&=R^3-3nt \quad (\text{for} \,\, \epsilon=0),\\[1ex]
    \tan_\epsilon(R-\varphi(t))+\varphi(t)&=\frac{1}{k}-\epsilon nt \quad\,\,\,(\text{for} \,\, \epsilon=\pm1).
  \end{aligned}
\end{equation}

  Hence, by making  $t=T=\varphi^{-1}(R)$ in \eqref{eq-varphi}, one concludes that the collapsing time $T$
  for the $\|A\|^2$-flow $F_t$ with initial data $f$ is 
\begin{equation} \label{eq-T}
T=
\left\{
\begin{array}{cl}
  \frac{R^3}{3n} & (\text{for} \,\, \epsilon =0), \\[1.5ex]
  \frac{\epsilon(1-kR)}{kn} & (\text{for} \,\, \epsilon =\pm 1).
\end{array}
\right.
\end{equation}
\end{example}

\begin{example}[\emph{parallel \,$H_r$-flow in \,$\q_\epsilon^{n+1}$ with spherical initial data}]
Let $f$ and  $\mathscr F$
be as in the preceding example. For the $H_r$-flow, the differential equation for $\varphi$ is
\begin{equation} \label{eq-ODE3}
\varphi'(t)=W_{f_{R-\varphi(t)}}={n\choose r}\cot_\epsilon^r(R-\varphi(t)), \,\,\, \varphi(0)=0,
\end{equation}
which separates as
\begin{equation} \label{eq-ode4}
\tan_\epsilon^r(R-\varphi)d\varphi={n\choose r}dt.
\end{equation}

For $\epsilon=0,$ the solution $\varphi$ is given implicitly  by
\[
\frac{(R-\varphi(t))^{r+1}}{r+1}=\frac{R^{r+1}}{r+1}-{n\choose r}t,
\]
which yields
\[
T={n\choose r}^{-1}\frac{R^{r+1}}{r+1}
\]
for the collapsing time of $F_t.$

For $\epsilon=\pm1,$ integration on the left hand side of \eqref{eq-ode4} is recurrent. In
Table \ref{table-T}, we list the solutions $\varphi$ and corresponding collapsing times for $r=1,2.$

\begin{table}[thb]%
\centering %
\begin{tabular}{ccccc}
\toprule %
  $r$   &            & $\varphi$ &  &$T$      \\\otoprule %
   1     &            & ${\cos_\epsilon(R-\varphi(t))}=e^{\epsilon nt}{\cos_\epsilon R}$& &$\frac{\epsilon}{n}\log(1/\cos_\epsilon R)$ \\\midrule
   2     &            & $\tan_\epsilon(R-\varphi(t))+\varphi(t)=\frac{1}{k}-\epsilon\frac{n(n-1)}{2}t$        & & $\frac{2\epsilon(1-kR)}{kn(n-1)}$     \\\bottomrule
\end{tabular}
\vtt
\caption{\small Function $\varphi$ and collapsing time $T$ for spherical parallel $H_r$-flows.}
\label{table-T}
\vspace{-.6cm}
\end{table}
\end{example}


\begin{example}[\emph{parallel $K$-flow  in \,$\s^{n+1}$ with non spherical initial data}]
Consider an isoparametric family $\mathscr F$ of hypersurfaces
$f_\tau:M^n\rightarrow\s^{n+1},$ $\tau\in (0,\pi/2),$
with two distinct principal curvatures
\[
k_1^\tau=\cot\tau \quad\text{and}\quad k_2^\tau=\cot(\tau+\pi/2)=-\tan\tau,
\]
whose  multiplicities are $m_1$ and $m_2,$ respectively. By a result due to Cartan,
$M$ is homeomorphic to the product $\s^{m_1}\times\s^{m_2},$ and the focal components
$\mathscr F_-$ and $\mathscr F_+$ are isometric to the standard spheres
$\s^{m_1}$ and $\s^{m_2},$ respectively.
Assuming
$m_2$ even, we have that the Gaussian curvature $K(\tau)$ of $f_\tau$ is
\[
K(\tau)=\cot^{m_1}(\tau)\tan^{m_2}(\tau),
\]
which is clearly a positive function on $(0,\pi/2).$

If $m_1=m_2,$ then $K=1$ for all $\tau\in(0,\pi/2).$ In this case, given $\tau_0\in (0,\pi/2),$
the flow $F_t=f_{\tau_0-t}$ is a solution to $K$-flow with initial data $f_{\tau_0}$ and collapsing
time $T=\tau_0.$

If $m_1>m_2,$ the function $K(\tau_0-\tau)=\cot^{m_1-m_2}(\tau_0-\tau)$ is increasing
in $[0,\tau_0).$ So, considering the solution $\varphi$ of  $\tau'=K(\tau)$ such that $\varphi(0)=0,$
we have from Theorem \ref{th-WflowsSpheres} that the flow $F_t=f_{\tau_0-\varphi(t)}$ collapses
into $\mathscr F_+$ at $T=\varphi^{-1}(\tau_0).$ Besides, setting $m=m_1-m_2,$ we obtain
the function $\varphi$ and the collapsing time $T$ by integrating $\tan^m(\tau_0-\varphi)$ with respect
to $\varphi,$ as in the preceding example.
For instance, if $m=1,$  the implicit equation for $\varphi$ and collapsing time
$T$ are
\[
{\cos(\tau_0-\varphi(t))}=e^t{\cos\tau_0} \quad\text{and}\quad T=\log(\sec\tau_0).
\]

An analogous reasoning applies if $m_2$ is odd and $m_1<m_2,$ in which case
$F_t$ collapses into $\mathscr F_-$ at  $T=\varphi^{-1}(\tau_0-\pi/2).$
\end{example}


\subsection{Parallel $W$-flows in rank-one symmetric spaces}
Let us consider now the rank-one symmetric spaces of non com\-pact type,
which are precisely the hyperbolic spaces
described through  the four normed division algebras:
$\R$ (real numbers), $\mathbb C$ (complex numbers),
$\mathbb K$ (quaternions) and $\mathbb O$ (octonions). They are denoted by
$\h_\R^m, \,\,\h_\mathbb C^m, \,\, \h_{\mathbb K}^m$ {and} $ \h_{\mathbb O}^2$
and called \emph{real hyperbolic space, complex hyperbolic space, quaternionic hyperbolic space} and
\emph{Cayley hyperbolic plane,} respectively. The real hyperbolic space is $\h^{n+1},$ i.e., the simply connected
space form of constant sectional curvature $-1.$
We shall adopt the unified notation $\hf$ for these hyperbolic spaces, where $m=2$ for $\mathbb F=\mathbb O.$
The real dimension  of $\hf$ is $n+1=m\dim \mathbb F.$ In particular,
$\h_{\mathbb O}^2$ has dimension $n+1=16.$

We add that any hyperbolic space $\hf$ is a Hadamard manifold with
negative bounded sectional curvature everywhere and, more importantly,
their geodesic spheres and horospheres are all isoparametric and strictly convex
(see \cite{berndtetal,dominguez-vazquez} for details and proofs).

\begin{proof}[Proof of Theorem \ref{th-WflowsHypspaces}]
Suppose that $f(M)$ is a geodesic sphere and set
\begin{equation} \label{eq-spheresHF}
\mathscr F=\{f_\tau:\s^n\rightarrow\hf\,;\,\tau\in(0,+\infty)\}
\end{equation}
for  the isoparametric family of  geodesic spheres of $\hf$
such that $f=f_{\tau_0}$ for some $\tau_0\in (0,+\infty).$
(Recall that the parameter $\tau$ is the radius of $f_\tau(\s^n).$)

The principal curvatures $k_i^\tau$ of $f_\tau$ with respect to the inward orientation are
\begin{equation} \label{eq-princcurvhf}
\begin{aligned}
    k_1^\tau &= \coth(\tau) \,\,\, \text{with multiplicity} \,\,\,  q\\[1.5ex]
    k_2^\tau &= \frac{1}{2}\coth(\tau/2) \,\,\,\text{with multiplicity}\,\,\,  n-q,
  \end{aligned}
\end{equation}
where $q=n$ for $\h_\R^{n+1}$,  $q=1$ for $\h_\mathbb C^m$, $q=3$ for $\h_{\mathbb K}^m$, and
$q=7$ for $\h_{\mathbb O}^2$  (see, e.g., \cite[pgs. 353, 543]{cecil-ryan} and \cite{kimetal}).
In particular, we have
\[
\lim_{\tau\rightarrow 0}k_i^\tau=+\infty, \,\,\, i=1,2.
\]

From the above considerations (and the monotonicity property of $W$),
just as in the real case, we conclude that  the parallel $\varphi$-flow
with initial data $f=f_{\tau_0}$
collapses to its center at $T=\varphi^{-1}(\tau_0).$

As we pointed out, the horospheres of any hyperbolic space $\hf$ are  isoparametric. In fact,
as in the real case, they foliate $\hf$ and have all the same constant principal curvatures
(cf. the proposition on page 88 of \cite{berndtetal}).
So, any horosphere of $\hf$ moves indefinitely with constant speed under any $W$-flow.
\end{proof}

In the next example, we calculate the collapsing time of a geodesic sphere of
$\hf$ moving under $H$-flow.

\begin{example}[\emph{parallel $H$-flow in $\hf$ with spherical initial data}]
Let $\mathscr F$ be as in \eqref{eq-spheresHF}. Then, the mean curvature $H(\tau)$ of
$f_\tau$ is
\[
H(\tau)=q\coth(\tau)+\frac{n-q}{2}\coth(\tau/2).
\]

Given $R\in (0,+\infty),$
by Corollary  \ref{cor-existence} and Theorem \ref{th-WflowsHypspaces},  the flow
\[
F_t:=f_{R-\varphi (t)}, \,\,\, t\in[0,T),
\]
where $\varphi$ satisfies
\begin{equation} \label{eq-ODE20}
\left\{
\begin{array}{l}
\varphi'(t)=H(R-\varphi(t))=q\coth(R-\varphi(t))+\frac{n-q}{2}\coth((R-\varphi(t))/2)\\ [1ex]
\varphi(0)=0,
\end{array}
\right.
\end{equation}
is a solution to \eqref{eq-Wflowintro} with initial data $f_R$ which
collapses into the center of $f_R(\s^n)$ at time $T=\varphi^{-1}(R).$

From  \eqref{eq-ODE20}, we obtain the equation
$$\frac{d\varphi}{q\coth(R-\varphi)+((n-q)/2)\coth((R-\varphi)/2)}=dt.$$
Setting $x=e^{R-\varphi}$ and integrating the resulting rational function $f(x)/g(x)$
by means of the identities
\begin{itemize}[parsep=1ex]
  \item $\int {\frac  {xdx}{ax^{2}+bx+c}}={\frac  {1}{2a}}\log \left|ax^{2}+bx+c\right|-{\frac  {b}{2a}}\int {\frac  {dx}{ax^{2}+bx+c}}+C,$
  \item $\int {\frac  {dx}{x(ax^{2}+bx+c)}}={\frac  {1}{2c}}\log \left|{\frac  {x^{2}}{ax^{2}+bx+c}}\right|-{\frac  {b}{2c}}\int {\frac  {dx}{ax^{2}+bx+c}}+C,$
\end{itemize}
we conclude that the solution $\varphi$ of  \eqref{eq-ODE20} is given implicitly by
\[
\log\left(\frac{e^{R-\varphi(t)}}{a(e^{2(R-\varphi(t))}+1)+be^{R-\varphi(t)}}\right)^{1/a}=t+C(R),
\]
where $a=({n+q})/{2},$ \,$b=n-q,$ and
\[
C(R)=\log\left(\frac{e^{R}}{a(e^{2R}+1)+be^{R}}\right)^{1/a}.
\]
Therefore, the collapsing time $T=\varphi^{-1}(R)$ is
\[
T=\log\left(\frac{a(e^{2R}+1)+be^R}{2ne^R}\right)^{\frac{1}{a}}.
\]
\end{example}


\section{Avoidance Principle for Weingarten Flows} \label{sec-avoidance}

In this section, we prove Theorem \ref{th-AP}, which constitutes an avoidance principle
for  Weingarten flows whose corresponding Weingarten functions  are  odd,  as we mentioned in the introduction.
The fundamental property of such a $W$-flow $F_t\colon M^n\rightarrow\overbar M^{n+1}$ is that it
is invariant under change of orientation.
Indeed, given  $(p,t)\in M\times [0,T),$
writing $k_i=k_i(p,t)$ and $N=N(p,t),$ one has
\begin{eqnarray}
W(-k_1,\dots ,-k_n)(-N)&=& -W(k_1,\dots ,k_n)(-N)\nonumber\\
                                      &=& W(k_1,\dots ,k_n)N\label{eq-invariantchangeorientation}\\
                                      &=& \frac{\partial F}{\partial t}(p,t).\nonumber
\end{eqnarray}

Along the proof of Theorem \ref{th-AP}, we shall  consider  graphs over
tangent spaces of hypersurfaces, as described below.

Let  $f:M^n\rightarrow\overbar M^{n+1}$ be an oriented hypersurface. Fix
$p\in M,$ and let $U\subset T_pM$ be an open neighborhood of the zero
vector of the tangent space of $M$ at $p.$ Given a  function
$\phi\in C^{\infty}(U),$ we call the set (assuming it is well defined)
\[
\Sigma_\phi:=\{\exp_{f(p)}(v+\phi(v)N(p))\in\overbar M\,;\, v\in U\}
\]
the \emph{graph} of $\phi$ on $U.$ Here, $\exp$ denotes the exponential map
of $\overbar M^{n+1}.$

Clearly, $\Sigma_\phi$ is an orientable hypersurface of $\overbar M^{n+1}.$ Moreover,
it is a well known fact that, if the zero vector $0\in U$ is a critical point
of $\phi,$ then the Hessian of $\phi$ at $0$ coincides with the second fundamental form
of $\Sigma_\phi$ at $\overbar p=\exp_{f(p)}(\phi(0)N(p))\in\Sigma_\phi.$ In this case,
we consider in  $\Sigma_\phi$ the orientation
such that the unit normal to $\Sigma_\phi$ at $\bar p$ is (cf. Theorem 3 in \cite{bishop-crittenden}, pg. 198):
$$N_\phi(\bar p)=d\exp_{f(p)}(\phi(0)N(p))N(p).$$
Notice that, if $\gamma:[0,L]\rightarrow\overbar M^{n+1}$ is the normalized geodesic from
$f(p)$ to $\bar p$ satisfying $\gamma'(0)=N(p),$ then $N_\phi(\bar p)=\gamma'(L).$ 


The following elementary result, which  will be useful to us,
compares principal curvatures of graphs whose corresponding functions have
a common critical point.
We adopt the convention of ordering
the principal curvatures  as $k_1\le \cdots \le k_n$\,.

\begin{lemma} \label{lem-graph}
With the above notation, assume that $\phi, \mu\in C^{\infty}(U)$
satisfy $\mu\ge\phi$ on $U,$ and that the null vector $0\in U$ is a minimum
of $\mu-\phi.$ Then, any principal curvature of $\Sigma_\mu$ at
$\overbar p =\exp_{f(p)}(\mu(0)N(p))$ is greater than, or
equal to, the corresponding principal curvature of $\Sigma_\phi$ at
$\overbar q =\exp_{f(p)}(\phi(0)N(p))$.
\end{lemma}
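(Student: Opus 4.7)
My plan is to reduce the lemma to a Hessian comparison on a common inner-product space, and then conclude via the Courant--Fischer min--max principle for ordered eigenvalues of symmetric operators. Since $\mu \ge \phi$ on $U$ with $0 \in U$ a minimum of $\mu - \phi$, one extracts $d(\mu - \phi)(0) = 0$ and $\mathrm{Hess}(\mu - \phi)(0) \ge 0$ as a symmetric bilinear form on $T_pM$. In the setting introduced immediately before the lemma, in which the Bishop--Crittenden identification of the Hessian at a critical point with the second fundamental form is invoked, it is implicit that $0$ is a critical point of both $\phi$ and $\mu$; hence $d\phi(0) = d\mu(0) = 0$ and $\mathrm{Hess}(\mu)(0) \ge \mathrm{Hess}(\phi)(0)$ on $T_pM$.

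Next I would align the two shape operators. Both $\overbar p$ and $\overbar q$ lie on the normal geodesic $\gamma(s) = \exp_{f(p)}(sN(p))$, with prescribed unit normals $N_\mu(\overbar p) = \gamma'(\mu(0))$ and $N_\phi(\overbar q) = \gamma'(\phi(0))$. Parallel transport along $\gamma$ is an isometry of tangent spaces; applied to the orthogonal complements of $\gamma'$, it identifies $T_{\overbar p}\Sigma_\mu$ and $T_{\overbar q}\Sigma_\phi$ with a common Euclidean copy of $T_pM$. Under this identification, combined with the description of each tangent plane via the differential of the exponential map acting on $T_pM$, the Bishop--Crittenden relation realises both shape operators as symmetric endomorphisms of the same inner-product space, and the Hessian inequality from the previous paragraph becomes the operator inequality $A_\mu \ge A_\phi$. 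The Courant--Fischer min--max characterisation then yields $\lambda_i(A_\mu) \ge \lambda_i(A_\phi)$ for each $i$ with the usual ordering, which, unwinding the identifications, is precisely the claimed comparison of principal curvatures.

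The main technical obstacle lies in this alignment step: one must carefully verify, through the nested identifications (Jacobi fields along $\gamma$ and parallel transport along $\gamma$), that the displacement $\mu(0) - \phi(0) \ge 0$ contributes only a common symmetric positive correction to each shape operator and hence does not reverse the sign of the ordered eigenvalue comparison. This is where the geometric content of the lemma actually resides; once handled, the remainder of the argument is the purely linear-algebraic min--max principle.
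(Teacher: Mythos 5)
Your proof is correct and follows essentially the same route as the paper's: deduce $\mathrm{Hess}(\mu-\phi)(0)\ge 0$ from the minimum condition, transfer this via the Bishop--Crittenden identification to $A_\mu - A_\phi\ge 0$, and compare ordered eigenvalues of symmetric operators. The paper cites a theorem in Gelfand's \emph{Lectures on Linear Algebra} where you cite Courant--Fischer --- the same classical fact --- and the tangent-space alignment you flag as the remaining technical step is likewise left implicit by the paper, which simply defers to the Bishop--Crittenden reference.
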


\begin{proof}
Since $0$ is a minimum of $\mu-\phi,$ we have that the Hessian of
$\mu-\phi$ at $0$ is positive semi-definite, which implies that the same is true
for the operator $A_\mu-A_\phi$\,, where $A_\mu$ and $A_\phi$ are the shape operators
of $\Sigma_\mu$ at $\bar q$ and $\Sigma_\phi$ at $\bar p$, respectively. However,
a standard result in Linear Algebra (see theorem on page 130 in \cite{gelfand})
asserts the following: If $A$ is  self-adjoint
and $B$ is positive semi-definite, then the eigenvalues of $A$ do not exceed
the corresponding ones of $A+B.$ Hence, setting $A=A_\phi$ and $B=A_\mu-A_\phi$\,,
the lemma follows.
\end{proof}

The next result, due to R. Hamilton \cite{hamilton} (see also \cite{mantegazza}),
will play a fundamental role in the sequel.

\begin{lemma}[Hamilton's trick] \label{lem-HT}
Let $u\colon M\times[0,T)\rightarrow\R$ be  a $C^1$ function
with the following property: For  each $t_0\in [0,T),$  there exist $\delta>0$ and a compact subset
$\Omega\subset M-\partial M$ such that, for any $t\in(t_0-\delta,t_0+\delta),$ the minimum
\[
u_{\rm min}(t):=\min_{p\in M}u(p,t)
\]
is attained (at least) at one point of $\Omega$.
Then, the function $u_{\rm min}$ is locally Lipschitz in $(0,T)$ and,
for each $t\in (0,T)$ where it is differentiable, one has
\[
u_{\rm min}'(t)=\frac{\partial u}{\partial t}(p_0,t),
\]
where  $p_0\in M-\partial M$ is any interior point at which $u(.\,, t)$
attains its minimum.
\end{lemma}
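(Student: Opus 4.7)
The plan is to establish the two conclusions in sequence: first that $u_{\rm min}$ is locally Lipschitz on $(0,T)$, and then to compute its derivative at any point of differentiability by a one-sided comparison argument.

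For the local Lipschitz property, I would fix $t_0 \in (0,T)$ and take $\delta > 0$ together with the compact set $\Omega \subset M - \partial M$ provided by the hypothesis, so that for every $t \in (t_0 - \delta, t_0 + \delta)$ there exists $p_t \in \Omega$ realizing $u_{\rm min}(t)$. Because $u$ is $C^1$ and $\Omega \times [t_0 - \delta/2, t_0 + \delta/2]$ is compact, the partial derivative $\partial u / \partial t$ is uniformly bounded by some constant $C > 0$ on this product. For $t_1, t_2$ in that subinterval, I pick minimizers $p_i = p_{t_i} \in \Omega$ and use the minimizing property of $p_1$ to estimate
\[ u_{\rm min}(t_1) - u_{\rm min}(t_2) = u(p_1, t_1) - u(p_2, t_2) \le u(p_2, t_1) - u(p_2, t_2) \le C|t_1 - t_2|. \]
Interchanging $t_1$ and $t_2$ yields $|u_{\rm min}(t_1) - u_{\rm min}(t_2)| \le C|t_1 - t_2|$, which is the asserted local Lipschitz bound (and, by Rademacher's theorem, implies differentiability almost everywhere in $(0,T)$).

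For the derivative formula at a point $t \in (0,T)$ where $u_{\rm min}$ is differentiable, I would pick any interior minimizer $p_0$ of $u(\cdot, t)$. Since $u_{\rm min}(t + h) \le u(p_0, t + h)$ for every admissible $h$ while equality holds at $h = 0$, it follows that
\[ u_{\rm min}(t+h) - u_{\rm min}(t) \le u(p_0, t+h) - u(p_0, t). \]
Dividing by $h > 0$ and letting $h \to 0^{+}$ gives $u_{\rm min}'(t) \le (\partial u / \partial t)(p_0, t)$; dividing by $h < 0$ reverses the inequality and, in the limit $h \to 0^{-}$, yields the opposite bound. Together, these force $u_{\rm min}'(t) = (\partial u / \partial t)(p_0, t)$, as claimed.

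The argument contains no serious obstacle; the only delicate point, and the sole place where the localization hypothesis is genuinely used, is that the minimizer $p_0$ must lie in $M - \partial M$, so that $(\partial u / \partial t)(p_0, t)$ is unambiguously defined and the $C^1$ control on $\Omega \times [t_0 - \delta/2, t_0 + \delta/2]$ makes both the Lipschitz estimate and the one-sided comparison legitimate. This is precisely what the assumption on the compact set $\Omega$ is designed to guarantee.
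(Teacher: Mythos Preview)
Your proof is correct and is the standard argument for Hamilton's trick. Note, however, that the paper does not supply its own proof of this lemma: it is stated without proof and attributed to Hamilton \cite{hamilton} (see also Mantegazza \cite{mantegazza}), so there is no in-paper argument to compare against. Your write-up matches the proof one finds in those references.
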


\begin{proof}[Proof of Theorem \ref{th-AP}]

Since  $M_2$ is compact, for each $t\in (0,T),$ there exists
a pair $(p_1,p_2)\in M_1\times M_2$ (possibly depending on $t$) such that
\begin{equation} \label{eq-1}
D(t)={\rm dist}^2(F_t^1(p_1),F_t^2(p_2)).
\end{equation}
In addition, ${\rm dist}^2$ is  smooth on $\Omega,$ which implies that the function
\[
u(p,q,t):={\rm dist}^2(F_t^1(p),F_t^2(q)), \,\,(p,q,t)\in M_1\times M_2\times [0,T),
\]
is smooth as well.  Thus,  Hamilton's Trick applies and gives that
$D(t)=u_{\min}(t) $ is locally Lipschitz, so that $D$
is differentiable almost everywhere (by  Rademacher Theorem).
Also, at a differentiable point $t_0,$ the following equality holds
\begin{equation} \label{eq-2}
D'(t_0)=\frac{\partial u}{\partial t}(p_1,p_2,t_0),
\end{equation}
where $(p_1,p_2)\in M_1\times M_2$ is any pair at which $u(.\,, t_0)$ attains its minimum.
So, it suffices to prove that $D'(t_0)\ge 0.$ This is certainly true if $D(t_0)=0$
(since $D$ is nonnegative), so that we can assume $D(t_0)\ne 0.$

In the above setting, the minimizing normalised
geodesic $\gamma_{t_0}\colon [0,L]\rightarrow\overbar M$
joining the points $\bar p_1:=F_{t_0}^1(p_1)$ and $\bar p_2:=F_{t_0}^2(p_2)$ is orthogonal
to both $F_{t_0}^1(M_1)$ (at $\bar p_1=\gamma_{t_0}(0)$) and $F_{t_0}^2(M_2)$  (at  $\bar p_2=\gamma_{t_0}(L)$).


Let us denote by $\Pi$ the tangent space of $F_{t_0}^1(M_1)$ at $\bar p_1$\,.
It is easily checked that, for $i=1,2,$
there exists an open neighborhood  $U$ of $0$  in $\Pi$
such that, for all $t$ sufficiently close to $t_0,$ in a suitable
neighborhood of $\bar p_i$  in $\overbar M,$
$F_{t}^i(M_i)$  is a
graph of a function $\phi_t^i\in C^{\infty}(U).$
In particular, we have $\phi_t^2>\phi_t^1$ on $U$. Also, since
$\bar p_i=\phi_{t_0}^i(0),$
we have that $0\in U$ is a minimum of $\phi_{t_0}^2-\phi_{t_0}^1$
on $U.$  

By \eqref{eq-invariantchangeorientation},
we can assume that $N_{t_0}^1(p_1)=\gamma'(0)$ and $N_{t_0}^2(p_2)=\gamma'(L).$
In this case, from Lemma \ref{lem-graph}, no  principal curvature
of $F_{t_0}^1(M_1)$ at $p_1$ exceeds  the corresponding one of $F_{t_0}^2(M_2)$ at $p_2.$
Therefore, from the monotonicity property of the Weingarten function $W,$
the following inequality holds:
\begin{equation} \label{eq-3}
W_{F_{t_0}^1}(p_1)\le W_{F_{t_0}^2}(p_2).
\end{equation}

Now, observe that the gradient  of the squared distance function
of $\overbar M$ at the point   $(\bar p_1,\bar p_2)\in\overbar M\times\overbar M$ is the vector
$\nabla {\rm dist}^2(\bar p_1,\bar p_2)=2\,{\rm dist}(\bar p_1,\bar p_2)(-\gamma_{t_0}'(0),\gamma_{t_0}'(L)).$ So,
\begin{equation} \label{eq-4}
\nabla {\rm dist}^2(\bar p_1,\bar p_2)=2\,{\rm dist}(\bar p_1,\bar p_2)(-N_{t_0}^1(p_1),N_{t_0}^2(p_2))\in T_{\bar p_1}\overbar M\times T_{\bar p_2}\overbar M.
\end{equation}

Putting together identities \eqref{eq-1}--\eqref{eq-4}, and considering the fact that
$F_t^1$ and $F_t^2$ are both $W$-flows, we have
\begin{eqnarray}
  D'(t_0) &=& \frac{\partial}{\partial t}{\rm dist}^2(F^1(p_1,t),F^2(p_2,t))|_{t=t_0} \nonumber \\
  &=& \left\langle \nabla {\rm dist}^2(\bar p_1,\bar p_2),
   \left(\frac{\partial F^1}{\partial t}(p_1,t_0), \frac{\partial F^2}{\partial t}(p_2,t_0)\right)\right\rangle_{\overbar M\times\overbar M} \nonumber \\
   &=&    2\,{\rm dist}(\bar p_1,\bar p_2)(-W_{F_{t_0}^1}(p_1)+W_{F_{t_0}^2}(p_2))\ge 0,          \nonumber
\end{eqnarray}
as we wished to prove.
\end{proof}

In the above proof, the hypothesis of $W$ being odd allowed us to choose the orientation
of the hypersurfaces $F_{t_0}^i$ at $p_i,$ $i=1,2,$ in such a way that their unit normals
at these points would coincide with $\gamma'(0)$ and $\gamma'(L).$ In this manner, we could
apply Lemma \ref{lem-graph} and then obtain the fundamental inequality \eqref{eq-3}.
From this, we conclude that  we can drop the assumption on $W$ being odd in the
statement of the avoidance principle, as long as we
have assured that the orientations of $F_t^i$ follow this pattern.

For instance, suppose that $F_t:M^n\rightarrow\overbar M^{n+1},$ $t\in[0,T),$
is a $W$-flow, where $M^n$ is compact and $\overbar M^{n+1}$ is either a space form $\q_\epsilon^{n+1}$  or
a hyperbolic space $\hf.$  Assume that $F_0(M)$ is contained in an open totally convex ball $B_R\subset\overbar M^{n+1},$
whose boundary $\partial B_R$ is a strictly convex geodesic sphere of $\overbar M^{n+1}$
(i.e., $0<R<\pi/2$ for $\overbar M^{n+1}=\s^{n+1}$).
In this setting, considering the parallel flow
$P_t\colon \s^n\rightarrow\overbar M^{n+1}$ with initial data $P_0(\s^n)=\partial B_R$ and inward orientation,
and assuming that $F_t$ is an embedding with the inward orientation for all $t\in [0,T),$ we
have that the normals at the points minimizing the distance between $F_t(M)$ and $P_t(M)$
coincide with the tangent vectors to the minimizing geodesic joining them, as in the above case.
Thus, the avoidance principle holds. In particular, by the results of the preceding section,
$F_t$ has a finite collapsing time which is at most equal to that of $P_t.$

Summarizing, we have the following result.

\begin{corollary} \label{cor-finitetime}
  Let $\overbar M^{n+1}$ be either a space form $\q_\epsilon^{n+1}$  or
  a hyperbolic space $\hf.$
  Given a Weingarten function $W\in C^{\infty}(\Gamma),$ assume that
  $F:M^n\times [0,T)\rightarrow\overbar M^{n+1}$ is a $W$-flow
  of a compact Riemannian manifold $M$ 
  such that $F_0(M)$ is contained in an open totally convex ball $B_R\subset\overbar M^{n+1},$
  whose boundary $\partial B_R$ is a strictly convex geodesic sphere of $\overbar M^{n+1}.$
  Assume further that  one of the following holds:
  \begin{itemize}
    \item $W$ is odd.
    \item $F_t$ is an embedding with the inward orientation for all $t\in [0,T).$
  \end{itemize}
  Under these conditions,  denoting by
  \[P:\s^n\times [0,T_R)\rightarrow\overbar M^{n+1}\]
  the parallel $W$-flow with inward orientation, collapsing time $T_R,$ and initial data $P_0(\s^n)=\partial B_R,$ we have that
  $F_t(M)\cap P_t(\s^n)=\emptyset \, \forall t\in [0,T).$
  Consequently, the inequalities $T\le T_R<\infty$ hold.
\end{corollary}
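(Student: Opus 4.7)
The conclusion has two parts: the disjointness $F_t(M)\cap P_t(\mathbb{S}^n)=\emptyset$ on $[0,T)$, and the numerical bound $T\le T_R<\infty$. My plan is to prove disjointness first, splitting according to which of the two listed hypotheses is assumed; then to read off $T_R<\infty$ from Section~\ref{sec-parallelflows}; and finally to bootstrap disjointness with the collapse of $P_t$ so as to force $T\le T_R$.

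If $W$ is odd, I would deduce disjointness as a direct corollary of Theorem~\ref{th-AP}. One applies that theorem with $M_1:=M$, $M_2:=\mathbb{S}^n$, $F^1:=F$, $F^2:=P$, and $\Omega$ a strongly convex open set containing $\overline{B_R}$---such an $\Omega$ is available in each ambient space under consideration, for instance a slightly larger ball $B_{R'}$ with $R<R'<\pi/2$ in $\s^{n+1}$, or an arbitrarily large ball in $\R^{n+1}$, $\h^{n+1}$, or $\hf$. Since $F_0(M)\subset B_R$ while $P_0(\mathbb{S}^n)=\partial B_R$, the initial images are disjoint, so Theorem~\ref{th-AP} propagates disjointness on the common existence interval $[0,\min(T,T_R))$.

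In the alternative case, in which $W$ need not be odd but $F_t$ is an embedding with the inward orientation for every $t$, Theorem~\ref{th-AP} does not apply verbatim, and I would instead rerun its proof. The paragraph immediately preceding the corollary already isolates the only place where oddness of $W$ was used, namely to flip, if necessary, the orientations at a pair of nearest points so as to identify the unit normals with the endpoint tangents of the connecting geodesic. In the present configuration this flip is unnecessary: at a pair $(p_1,p_2)\in M\times\mathbb{S}^n$ realizing $D(t_0)$, I would parametrize the minimizing geodesic $\gamma$ from $\bar p_1:=P_{t_0}(p_2)$ to $\bar p_2:=F_{t_0}(p_1)$, and then the inward unit normals $N^P$ at $p_2$ and $N^F$ at $p_1$ coincide with $\gamma'(0)$ and $\gamma'(L)$, respectively, precisely because $F_{t_0}(M)$ lies inside the region bounded by $P_{t_0}(\mathbb{S}^n)$. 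Setting $F^1:=P$ and $F^2:=F$, Lemma~\ref{lem-graph} then yields the principal-curvature inequality in the direction required; monotonicity of $W$ gives $W_{P_{t_0}}(p_2)\le W_{F_{t_0}}(p_1)$; and the $D'(t_0)\ge 0$ computation of Theorem~\ref{th-AP}, together with Hamilton's trick, closes the argument.

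For the quantitative part, $T_R<\infty$ is immediate from Theorem~\ref{th-WflowsSpaceforms}(iii) when $\overbar M=\q_\epsilon^{n+1}$ and from Theorem~\ref{th-WflowsHypspaces}(ii) when $\overbar M=\hf$, with $P_t(\mathbb{S}^n)$ collapsing to the center of $B_R$ at $t=T_R$. To deduce $T\le T_R$ I would argue by contradiction: were $T>T_R$, the avoidance proven above would hold on $[0,T_R)$, forcing $F_t(M)$ to stay inside the open region bounded by $P_t(\mathbb{S}^n)$ for every $t<T_R$. As $t\to T_R^-$ this bounded region shrinks to a single point, so $\mathrm{diam}(F_t(M))\to 0$; by continuity, the map $F_{T_R}$ would have single-point image, contradicting the fact that $F_{T_R}$ is a smooth immersion of a positive-dimensional compact manifold. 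The subtlest step is the bootstrap in the embedding case: identifying the normals with the tangents of $\gamma$ relies on $F_t(M)$ being enclosed by $P_t(\mathbb{S}^n)$, which in turn is maintained by the very avoidance being proved. A standard maximality argument on the (open-closed in $t$) set of times at which both enclosure and disjointness hold, starting from $t=0$ where both are strict, closes the loop.
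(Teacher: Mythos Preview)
Your proposal is correct and mirrors the paper's own argument, which is the informal discussion in the two paragraphs immediately preceding the corollary: invoke Theorem~\ref{th-AP} directly when $W$ is odd, rerun its proof with the normals already aligned in the embedding case, and then use the collapse results of Section~\ref{sec-parallelflows} to get $T_R<\infty$ and hence $T\le T_R$. One small omission: your citations for $T_R<\infty$ (Theorems~\ref{th-WflowsSpaceforms}(iii) and~\ref{th-WflowsHypspaces}(ii)) do not cover $\overbar M=\mathbb{S}^{n+1}$; there you would appeal to Theorem~\ref{th-WflowsSpheres} with $g=1$, since for a geodesic sphere of radius $R<\pi/2$ all principal curvatures equal $\cot\tau$ and the monotonicity hypothesis is immediate.
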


\begin{proof}[Proof of Theorem \ref{th-embeddednesspreserving}]
Since $F_0$ is an embedding, for any sufficiently small $t>0,$ $F_t$ is also an embedding.
Let us suppose, by contradiction, that there exists a first time $t_0>0$ such that $F_{t_0}$ is not
an embedding. In this way,  $$\Omega:=\{(p,q)\in M\times M\,;\, p\ne q, \,\, F_{t_0}(p)=F_{t_0}(q)\}$$
is a nonempty compact set of $M\times M$ which is disjoint from the diagonal $D$ of $M\times M.$
Thus, there is an open set $U\subset M\times M$ such that  $D\subset U$ and $\Omega$ is disjoint from
the closure of $U$ in $M\times M.$

Now, observing that $V:=(M\times M)-U$ is compact in $M\times M,$ define the function
\[
D(t)=\min_{(p,q)\in V}{\rm dist}^2\,(F_t(p),F_t(q)), \,\,\, t\in[0,t_0].
\]
Since, for a sufficiently small $t,$ $F_t$ is an embedding, for such a $t$ we have
$D(t)>0.$ However, proceeding just as in the proof of Theorem \ref{th-AP}, we conclude
that $D$ is nondecreasing, which contradicts the fact that $D(t_0)=0.$
\end{proof}

\vt
\noindent
{\bf Acknowledgements.}
The author is indebt to F. Fontenele, J. Nazareno Gomes, and M. Marrocos for
helpful conversations during the preparation of this paper.

\end{document}